\begin{document}

\newtheorem{assumption}{Assumption}[section]
\newtheorem{definition}{Definition}[section]
\newtheorem{lemma}{Lemma}[section]
\newtheorem{proposition}{Proposition}[section]
\newtheorem{theorem}{Theorem}[section]
\newtheorem{corollary}{Corollary}[section]
\newtheorem{remark}{Remark}[section]

\title{A Stratum Approach to Global Stability of Complex Balanced Systems}
\author{David Siegel\thanks{Supported by a Natural Sciences and Engineering Research Council of Canada Research Grant} and Matthew D. Johnston \thanks{Supported by a Natural Sciences and Engineering Research Council of Canada Post-Graduate Scholarship \newline \textbf{Keywords:} chemical kinetics; stability theory; persistence; complex balancing; locking sets \newline \textbf{AMS Subject Classifications:} 80A30, 34D20, 37C75.}
  \vspace*{.2in} \\
Department of Applied Mathematics, University of Waterloo, \\
Waterloo, Ontario, Canada N2L 3G1 }
\date{}
\maketitle

\bigskip

\begin{abstract}

It has long been known that complex balanced mass-action systems exhibit a restrictive form of behaviour known as locally stable dynamics. This means that within each compatibility class $\mathcal{C}_{\mathbf{x}_0}$---the forward invariant space where solutions lies---there is exactly one equilibrium concentration and that this concentration is locally asymptotically stable. It has also been conjectured that this stability extends globally to $\mathcal{C}_{\mathbf{x}_0}$. That is to say, all solutions originating in $\mathcal{C}_{\mathbf{x}_0}$ approach the unique positive equilibrium concentration rather than $\partial \mathcal{C}_{\mathbf{x}_0}$ or $\infty$. To date, however, no general proof of this conjecture has been found.

In this paper, we approach the problem of global stability for complex balanced systems through the methodology of dividing the positive orthant into regions called strata. This methodology has been previously applied to detailed balanced systems---a proper subset of complex balanced systems---to show that, within a stratum, trajectories are repelled from any face of $\mathbb{R}_{\geq 0}^m$ adjacent to the stratum. Several known global stability results for detailed balanced systems are generalized to complex balanced systems.

\bigskip

\end{abstract}

\section{Introduction}

Chemical reaction modeling is rapidly becoming a topic of great interest in areas such as systems biology, atmospherics, pharmaceutics, industrial chemistry, etc., where mathematical tools are used to simplify, analyze, and illuminate behaviour of a variety of chemical-reaction-based natural phenomena. Consequently, many foundational concepts of such mathematical models have once again achieved prominence in the mathematical literature \cite{A,A3,C-D-S-S,S,S-C,S-M}.

One such foundational concept is that of complex balancing of chemical reaction networks, which has been used in analysis of industrial chemical reaction networks \cite{F2}. In 1972, the authors F. Horn and R. Jackson showed that relative to every compatibility class---the invariant space where solutions lie---complex balanced chemical reaction networks necessarily have exactly one positive equilibrium state and that this equilibrium state is asymptotically stable \cite{H-J1}. In conjunction with M. Feinberg, they also derived necessary and sufficient conditions for a system to be complex balanced based solely on the reaction graph of the system \cite{F1,H}. This work culminated in the \emph{Deficiency Zero Theorem} and was a substantial generalization of existing results on stability which required conservation of mass and balancing of forward and backwards reaction rates at equilibrium for each reaction.

It was theorized at the time that the convergence of solutions to the positive equilibrium state extended globally to the entire positive compatibility class, effectively eliminating the possibility that solutions converged to the boundary of the positive orthant. (Indeed, the point seemed so inextricably connected with asymptotic stability that in the original paper the authors errantly asserted that they had in fact proved just that! \cite{H-J1}) To date, however, the conjecture is only known to hold for certain special cases, which will not be summarized here. Important work has also been done in restricting the nature of any possible $\omega$-limit points on the boundary. In \cite{S-M}, the authors show that any $\omega$-limit point lying on the boundary is a complex balanced equilibrium concentration. In \cite{A} and \cite{A3}, the authors show that $\omega$-limit points may only lie on certain subsets of the boundary where these subsets can be easily determined by the reaction graph of the mechanism.

In this paper, we extend the stability results obtained in \cite{C-D-S-S}. In that paper, the authors showed that for detailed balanced mechanisms with bounded, two-dimensional compatibility classes, solutions originating in the positive orthant necessarily tend to the associated positive equilibrium concentration and not to the boundary. Their approach consisted of dividing the positive orthant into regions, called strata, and then manipulating the governing differential equations of the mechanism to show that within each stratum trajectories were repelled from the boundary. By generalizing their concept of strata, we will show how their results can be extended to complex balanced mechanisms.

The paper is organized as follows: in Section 2, we briefly introduce the relevant mathematical model for chemical reaction networks and present the notion of a complex balanced system; in Section 3, we extend the notion of strata and the linear Lyapunov functions $H(\mathbf{x}) = \langle \alpha, \mathbf{x} \rangle$ introduced in \cite{C-D-S-S} to complex balanced mechanisms, and present a few applications and examples; in Section 4, we give some concluding remarks including why we think this approach is a significant step towards proving the general \emph{Global Attractor Conjecture} (Proposition \ref{globalattractorconjecture}).

Throughout the paper, we will let $\mathbb{R}_{>0}^m$ and $\mathbb{R}_{\geq 0}^m$ denote the $m$-dimensional spaces with all coordinates strictly positive and non-negative, respectively.

\section{Background}

In this section, we outline the important concepts of chemical kinetics which will be needed throughout this paper. We introduce the concept of complex balancing first introduced in \cite{F1,H,H-J1} and outline the relevant results of these papers.

\subsection{Chemical Reaction Mechanisms}

Within the mathematical literature, several distinct ways to represent chemical reaction networks have been proposed. In this paper, we will follow closely the \emph{complex}-oriented formulation introduced by Horn \emph{et al.} in \cite{H-J1}. (For examples of \emph{reaction}- and \emph{species}-oriented formulations, see \cite{B-B1} and \cite{V-H}, respectively.)

An elementary chemical reaction consists of a set of reactants combining at some fixed rate to form some set of products. We will let $\mathcal{A}_j$ denote the \emph{species} or \emph{reactants} of the system and define $| \mathcal{S} | = m$ where $\mathcal{S}$ is the set of distinct species of the system. The set of all reactants or all products of a particular reaction are called \emph{complexes} and will be denoted $\mathcal{C}_i$. They are linear combinations of the species and therefore can be denoted $\mathcal{C}_i = \sum_{j=1}^m z_{ij} \mathcal{A}_j$ where $\mathbf{z}_i = [ z_{i1}, z_{i2}, \ldots, z_{im}] \in \mathbb{Z}_{\geq 0}^m$. We define $| \mathcal{C} | = n$ where $\mathcal{C}$ is the set of distinct complexes in the system.

It is convenient to represent the elementary reactions of our system not as a list of individual reactions, but as interactions between the $n$ distinct complexes of the system. In this setting, the reaction network can be represented as
\begin{equation}
\label{reaction2}
\mathcal{C}_i \; \stackrel{k(i,j)}{\longrightarrow} \; \mathcal{C}_j, \hspace{0.2in} \mbox{ for }i,j=1, \ldots, n,
\end{equation}
\noindent where $\mathcal{C}_i$ is the \emph{reactant complex}, $\mathcal{C}_j$ is the \emph{product complex}, and $k(i,j) \geq 0$ is the reaction rate associated with the reaction from $\mathcal{C}_i$ to $\mathcal{C}_j$ \cite{H-J1}. This representation of a chemical kinetics mechanism will be called the \emph{reaction graph}.

Note that if either $i=j$ or the mechanism does not contain a reaction with $\mathcal{C}_i$ as the reactant and $\mathcal{C}_j$ as the product, then $k(i,j)=0$. Otherwise, $k(i,j)>0$. The set of index pairs $(i,j)$ for which $k(i,j)>0$ will be denoted by $\mathcal{R}$ and the number of such index pairs will be denoted by $| \mathcal{R} | = r$.

\subsection{Mass-Action Kinetics}

We are particularly interested in the evolution of the concentrations of the chemical species. We will let $x_i = [ \mathcal{A}_i ]$ denote the concentration of the $i^{th}$ species and denote by $\mathbf{x} = [x_1 \; x_2 \; \cdots \; x_m]^T \in \mathbb{R}_{\geq 0}^m$  the \emph{concentration vector}.

The differential equations governing the chemical reactions system (\ref{reaction2}) under the assumption of mass action dynamics can be expressed as
\begin{equation}
\label{de}
\frac{d\mathbf{x}}{dt} = \mathbf{f}(\mathbf{x}) = \sum_{(i,j) \in \mathcal{R}} k(i,j) \: ( \mathbf{z}_j - \mathbf{z}_i ) \: \mathbf{x}^{\mathbf{z}_i}
\end{equation} 
\noindent where $\mathbf{x}^{\mathbf{z}_{i}} = \prod_{j=1}^m x_j^{z_{ij}}$.

Several fundamental properties of chemical kinetics systems are readily seen from this formulation. In particular, it is clear from (\ref{de}) that solutions are not able to wander around freely in $\mathbb{R}^m$. Instead, they are restricted to \emph{stoichiometric compatibility classes}, \cite{H-J1}.

\begin{definition}
\label{stoic}
The \textbf{stoichiometric subspace} for a chemical reaction mechanism (\ref{reaction2}) is the linear subspace $S \subset \mathbb{R}^m$ such that
\[S= \mbox{span} \left\{ \left. (\mathbf{z}_j-\mathbf{z}_i) \; \right| \; (i,j) \in \mathcal{R} \right\}.\]
\noindent The dimension of the stoichiometric subspace will be denoted by $| S | = s$.
\end{definition}

\begin{definition}
The positive \textbf{stoichiometric compatibility class} containing the initial concentration $\mathbf{x}_0 \in \mathbb{R}^m_{>0}$ is the set $\mathsf{C}_{\mathbf{x}_0} = (\mathbf{x}_0 + S) \cap \mathbb{R}^m_{>0}$.
\end{definition}

\begin{proposition} [\cite{H-J1,V-H}]
\label{proposition2}
Let $\mathbf{x}(t)$ be the solution to (\ref{de}) with $\mathbf{x}(0)=\mathbf{x}_0 \in \mathbb{R}^m_{>0}$. Then $\mathbf{x}(t) \in \mathsf{C}_{\mathbf{x}_0}$ for $t \geq 0$.
\end{proposition}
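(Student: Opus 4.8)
The plan is to verify the two defining properties of the compatibility class $\mathsf{C}_{\mathbf{x}_0} = (\mathbf{x}_0 + S) \cap \mathbb{R}^m_{>0}$ one at a time: that the solution stays in the affine subspace $\mathbf{x}_0 + S$, and that it stays in the open orthant $\mathbb{R}^m_{>0}$. Since the right-hand side $\mathbf{f}$ of (\ref{de}) is a polynomial vector field, it is locally Lipschitz, so the Picard--Lindel\"of theorem yields a unique solution $\mathbf{x}(t)$ on a maximal forward interval of existence; I would fix this solution and argue on that interval.

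\emph{Confinement to $\mathbf{x}_0 + S$.} The key observation is that $\mathbf{f}(\mathbf{x}) \in S$ for every $\mathbf{x}$: each summand of (\ref{de}) is a scalar multiple of a reaction vector $\mathbf{z}_j - \mathbf{z}_i$, and by Definition \ref{stoic} these vectors span the linear subspace $S$, which is therefore closed under the finite sum. Consequently, for any fixed $\mathbf{w} \in S^\perp$, $\frac{d}{dt}\langle \mathbf{w}, \mathbf{x}(t)\rangle = \langle \mathbf{w}, \mathbf{f}(\mathbf{x}(t))\rangle = 0$, so $\langle \mathbf{w}, \mathbf{x}(t)\rangle = \langle \mathbf{w}, \mathbf{x}_0\rangle$ is conserved. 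Letting $\mathbf{w}$ range over a basis of $S^\perp$ forces $\mathbf{x}(t) - \mathbf{x}_0 \in (S^\perp)^\perp = S$, i.e. $\mathbf{x}(t) \in \mathbf{x}_0 + S$.

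\emph{Positivity.} For each species index $i$ I would exploit the sign structure of the $i$-th component $f_i$. The terms in $f_i$ that decrease $x_i$ are exactly those reactions $(p,q) \in \mathcal{R}$ whose reactant complex contains $\mathcal{A}_i$, i.e. those with $z_{pi} \geq 1$; each such term carries the factor $x_i^{z_{pi}}$ coming from the monomial $\mathbf{x}^{\mathbf{z}_p}$. Hence $f_i$ admits the decomposition $f_i(\mathbf{x}) = P_i(\mathbf{x}) - x_i\,Q_i(\mathbf{x})$ with $P_i, Q_i \geq 0$ on $\mathbb{R}^m_{\geq 0}$. Multiplying the scalar equation $\dot{x}_i = P_i - x_i Q_i$ by the integrating factor $\mu_i(t) = \exp\!\left(\int_0^t Q_i(\mathbf{x}(\tau))\,d\tau\right)$ gives $\frac{d}{dt}\big(\mu_i(t)\,x_i(t)\big) = \mu_i(t)\,P_i(\mathbf{x}(t)) \geq 0$, so $x_i(t) \geq x_{0,i}\,\mu_i(t)^{-1} > 0$ whenever the integral defining $\mu_i$ is finite.

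The step I expect to be the main obstacle is pinning down this last proviso and its interplay with the interval of existence, since $Q_i(\mathbf{x}(\cdot))$ must be known integrable before the estimate is meaningful. I would resolve the apparent circularity with a maximal-time argument: on any compact subinterval $[0,T]$ of the maximal existence interval the solution is continuous, hence bounded, so $Q_i(\mathbf{x}(\cdot))$ is bounded there and $\mu_i(T) < \infty$; the displayed inequality then forces $x_i(T) > 0$, and running $T$ up to the maximal time shows the trajectory never reaches a coordinate hyperplane $\{x_i = 0\}$ while it persists. Combining the two parts yields $\mathbf{x}(t) \in (\mathbf{x}_0 + S) \cap \mathbb{R}^m_{>0} = \mathsf{C}_{\mathbf{x}_0}$ throughout the interval of existence. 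To upgrade this to all $t \geq 0$, one notes that when $\mathsf{C}_{\mathbf{x}_0}$ is bounded the solution is confined to a compact set and hence cannot blow up in finite time, so the maximal interval is all of $[0,\infty)$; the general unbounded case would require an independent a priori bound, a point worth flagging explicitly.
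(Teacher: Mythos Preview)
The paper does not supply its own proof of this proposition; it is stated with citations to \cite{H-J1,V-H}, and the remark immediately following it makes explicit that the conclusion is to be read on the (possibly finite) maximal interval of existence. Your argument is the standard one and is correct, and your closing caveat about global-in-time existence matches exactly what the paper flags in that remark, so there is nothing missing relative to the paper's own treatment.

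One minor wording point: the terms in $f_i$ with $z_{pi}\geq 1$ are not \emph{exactly} the negative ones (e.g.\ a reaction with $z_{pi}=1$, $z_{qi}=2$ contributes positively). What you actually need, and what is true, is that every term with $z_{pi}=0$ is non-negative; this already yields a decomposition $f_i = P_i + x_i R_i$ with $P_i\geq 0$, and the integrating-factor estimate $x_i(t)\geq x_{0,i}\exp\!\big(\int_0^t R_i(\mathbf{x}(\tau))\,d\tau\big)>0$ goes through regardless of the sign of $R_i$. This does not affect the validity of your proof.
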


Note that a solution $\mathbf{x}(t)$ of (\ref{de}) with $\mathbf{x}(0)=\mathbf{x}_0 \in \mathbb{R}^m_{>0}$ may exist only on a finite interval $0 \leq t < T$, in which case $\mathbf{x}(t) \in \mathsf{C}_{\mathbf{x}_0}$ for $0 \leq t < T$. Throughout this paper we only consider solutions to (\ref{de}) satisfying $\mathbf{x}(0)=\mathbf{x}_0 \in \mathbb{R}^m_{>0}$, so that Proposition \ref{proposition2} holds.

\subsection{Detailed and Complex Balanced Systems}
\label{detailedsection}

One important characteristic by which we can categorize chemical reaction mechanisms is the nature of the equilibrium concentrations permitted by the mechanism. We start by introducing two such classifications and illustrating how they are related.

\begin{definition}
\label{detailedbalanced}
The concentration $\mathbf{x}^* \in \mathbb{R}^m_{>0}$ is said to be a \textbf{detailed balanced equilibrium concentration} of (\ref{de}) if
\begin{equation}
\label{db}
k(i,j) (\mathbf{x}^*)^{\mathbf{z}_i} = k(j,i) (\mathbf{x}^*)^{\mathbf{z}_j}
\end{equation}
for all $i,j=1,\ldots,n$. A mass-action system is said to be \textbf{detailed balanced} for a given set of rate constants $k(i,j)$ if every positive equilibrium concentration of (\ref{de}) is detailed balanced.
\end{definition} 

\begin{definition}
\label{complexbalanced}
The concentration $\mathbf{x}^* \in \mathbb{R}^m_{>0}$ is said to be a \textbf{complex balanced equilibrium concentration} of (\ref{de}) if
\begin{equation}
\label{cb2}
\sum_{j=1}^{n} k(j,i) (\mathbf{x}^*)^{\mathbf{z}_j} = (\mathbf{x}^*)^{\mathbf{z}_i} \sum_{j=1}^{n} k(i,j)
\end{equation}
for all $i = 1,\ldots,n$. A mass-action system is said to be \textbf{complex balanced} for a given set of rate constants $k(i,j)$ if every positive equilibrium concentration of (\ref{de}) is complex balanced.
\end{definition}

Analysis of complex balanced systems is made easier by the following lemma. An analogous result exists for detailed balanced systems as a consequence of detailed balanced equilibria being a subset of complex balanced equilibria (see Theorem 3.10, \cite{S-M}).

\begin{lemma}[Lemma 5B, \cite{H-J1}]
\label{lemma2}
If a mass action system is complex balanced at some concentration $\mathbf{x}^* \in \mathbb{R}^m_{>0}$, then it is complex balanced at all equilibrium concentrations.
\end{lemma}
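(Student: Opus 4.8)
The plan is to reduce the statement to properties of a Lyapunov function built from the given equilibrium $\mathbf{x}^*$, following the spirit of Horn and Jackson's original argument. First I would rewrite the right-hand side of (\ref{de}) in the factored form $\mathbf{f}(\mathbf{x}) = Y\,A\,\Psi(\mathbf{x})$, where $Y$ is the $m \times n$ matrix whose $i$-th column is $\mathbf{z}_i$, the vector $\Psi(\mathbf{x}) \in \mathbb{R}^n$ collects the monomials $\Psi_i(\mathbf{x}) = \mathbf{x}^{\mathbf{z}_i}$, and $A$ is the $n \times n$ Kirchhoff matrix of the reaction graph, with off-diagonal entries $A_{ij} = k(j,i)$ and diagonal entries $A_{ii} = -\sum_{j} k(i,j)$. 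A direct index computation confirms this factorization, and under it the complex balancing condition (\ref{cb2}) at $\mathbf{x}^*$ is exactly the statement $A\,\Psi(\mathbf{x}^*) = \mathbf{0}$, whereas an arbitrary positive equilibrium $\mathbf{y}$ satisfies only the weaker condition $A\,\Psi(\mathbf{y}) \in \ker Y$. The goal is therefore to upgrade $A\,\Psi(\mathbf{y}) \in \ker Y$ to $A\,\Psi(\mathbf{y}) = \mathbf{0}$.

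Next I would introduce the pseudo-Helmholtz function
\[
H(\mathbf{x}) = \sum_{l=1}^m \left[ x_l \ln\!\left(\frac{x_l}{x_l^*}\right) - x_l + x_l^* \right],
\]
and compute its derivative along solutions of (\ref{de}). Since $\nabla H(\mathbf{x})$ has $l$-th entry $\ln(x_l/x_l^*)$, and since $Y^{T}\nabla H(\mathbf{x})$ then has $i$-th entry $\ln(\Psi_i(\mathbf{x})/\Psi_i(\mathbf{x}^*))$, the factorization gives
\[
\dot H = \big\langle \nabla H(\mathbf{x}),\, Y A \Psi(\mathbf{x}) \big\rangle = \sum_{i=1}^n \ln\!\left(\frac{\Psi_i(\mathbf{x})}{\Psi_i(\mathbf{x}^*)}\right) (A\Psi(\mathbf{x}))_i .
\]
Writing $\mu_i = \Psi_i(\mathbf{x})/\Psi_i(\mathbf{x}^*)$ and regrouping the sum edge-by-edge over $\mathcal{R}$, the derivative takes the form $\dot H = \sum_{(j,i)\in\mathcal{R}} k(j,i)\,\Psi_j(\mathbf{x}^*)\,\mu_j \ln(\mu_i/\mu_j)$.

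The key step is then the elementary estimate $\ln(\mu_i/\mu_j) \le \mu_i/\mu_j - 1$, applied term by term. Bounding $\dot H$ in this way and collecting the resulting linear terms, the complex balancing identity (\ref{cb2}) at $\mathbf{x}^*$ makes the two index sums cancel after relabelling, yielding $\dot H \le 0$. Because every rate $k(j,i)$ and every coefficient $\Psi_j(\mathbf{x}^*)\mu_j$ is strictly positive, equality in the chain forces $\ln(\mu_i/\mu_j) = \mu_i/\mu_j - 1$, hence $\mu_i = \mu_j$, for each edge $(j,i) \in \mathcal{R}$; as each linkage class is connected, the ratios $\mu_i$ are then constant on each linkage class. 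Finally, at any positive equilibrium $\mathbf{y}$ we have $\dot H(\mathbf{y}) = \langle \nabla H(\mathbf{y}), \mathbf{f}(\mathbf{y})\rangle = 0$ because $\mathbf{f}(\mathbf{y}) = \mathbf{0}$, so the equality case applies and $\mu_i(\mathbf{y})$ equals some constant $c_\lambda$ on each linkage class $\lambda$. Since $A$ couples only complexes lying in a common linkage class, for any complex $i$ in $\lambda$ we get $(A\Psi(\mathbf{y}))_i = c_\lambda\,(A\Psi(\mathbf{x}^*))_i = 0$, so $\mathbf{y}$ is complex balanced.

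I expect the main obstacle to be the sign analysis of $\dot H$: obtaining $\dot H \le 0$ requires both the single-variable inequality $\ln u \le u - 1$ and a careful relabelling of the edge sums so that complex balancing at $\mathbf{x}^*$ can be invoked to annihilate the leftover linear contributions. The sharp characterization of the equality case, together with the linkage-class connectivity argument needed to pass from ``constant across each edge'' to ``complex balanced,'' is the delicate part; by comparison, the factorization $\mathbf{f} = Y A \Psi$ and the verification that $\dot H(\mathbf{y}) = 0$ at equilibria are routine.
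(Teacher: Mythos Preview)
Your argument is correct and is essentially the classical Horn--Jackson proof: the paper does not supply its own proof of this lemma but simply cites it as Lemma~5B of \cite{H-J1}, and what you have sketched is precisely that original argument via the pseudo-Helmholtz function, the inequality $\ln u \le u-1$, and constancy of the monomial ratios on linkage classes.
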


It is clear that every detailed or complex balanced equilibrium concentration is an equilibrium concentration of (\ref{de}) and that every detailed balanced equilibrium concentration is also complex balanced. It should be noted, however, that not every equilibrium concentration is a detailed or complex balanced equilibrium concentration, as can be seen by the system
\begin{equation}
\label{system1}
\begin{array}{c}
\mathcal{A}_1 \stackrel{\alpha}{\longrightarrow} \mathcal{A}_2\\
2 \mathcal{A}_2 \stackrel{\beta}{\longrightarrow} 2 \mathcal{A}_1.
\end{array}
\end{equation}
No equilibrium permitted by mechanism (\ref{system1}) is either detailed or complex balanced. Similarly, not every complex balanced equilibrium is a detailed balanced equilibrium, as can be seen by
\begin{equation}
\label{system2}
\begin{array}{ccc} \mathcal{A}_1 \; \stackrel{\alpha}{\longrightarrow} \mathcal{A}_2 \\ {}_\gamma \nwarrow \hspace{0.2in} \swarrow_\beta \\ \mathcal{A}_3. \end{array}
\end{equation}
This mechanism permits complex balanced equilibria, but no detailed balanced equilibria.

The structure of the reaction graph is intricately connected to the conditions of detailed and complex balancing of equilibrium points; however, for the sake of brevity we omit such discussion here (for further details, see \cite{F1,H,H-J1,V-H}).

\subsection{Known Stability Results}

In this section, we will discuss some of the known stability results for complex balanced systems. In particular, we state what has come to be known as the \emph{Global Attractor Conjecture} and give several circumstances discussed in the literature under which it is known to hold. 

We start, however, with the main result of \cite{H-J1}.

\begin{theorem}
\label{stabilitytheorem}
If a mass-action system is complex balanced, then there exists within each positive compatibility class $\mathsf{C}_{\mathbf{x}_0}$ a unique positive equilibrium point $\mathbf{x}^*$ which is asymptotically stable.
\end{theorem}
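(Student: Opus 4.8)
The plan is to construct an explicit Lyapunov function à la Horn–Jackson and verify its properties in three stages. Let $\mathbf{x}^* \in \mathbb{R}^m_{>0}$ be a fixed positive complex balanced equilibrium (its existence in each compatibility class would be argued separately, using the algebraic structure of the complex balancing equations \eqref{cb2}; this is the part I would be most worried about and defer to the reaction-graph machinery of \cite{F1,H,H-J1}). The candidate function is the \emph{pseudo-Helmholtz} free energy
\begin{equation}
\label{lyap}
G(\mathbf{x}) = \sum_{i=1}^m \left[ x_i \left( \ln x_i - \ln x_i^* - 1 \right) + x_i^* \right],
\end{equation}
which is strictly convex on $\mathbb{R}^m_{>0}$, is bounded below, and satisfies $G(\mathbf{x}) \geq 0$ with equality exactly at $\mathbf{x} = \mathbf{x}^*$. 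First I would establish that $\mathbf{x}^*$ is the unique positive equilibrium in $\mathsf{C}_{\mathbf{x}_0}$: by Lemma \ref{lemma2} every positive equilibrium is complex balanced, and strict convexity of $G$ together with the fact that $\nabla G(\mathbf{x}) = (\ln(x_1/x_1^*), \ldots, \ln(x_m/x_m^*))^T$ is orthogonal to $S$ at any equilibrium forces at most one critical point of $G$ restricted to each coset $\mathbf{x}_0 + S$.

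The heart of the argument is the computation of $\dot{G} = \langle \nabla G(\mathbf{x}), \mathbf{f}(\mathbf{x}) \rangle$ along trajectories of \eqref{de}. Substituting the vector field and writing $u_i = \ln(x_i/x_i^*)$, one rearranges the double sum over $(i,j) \in \mathcal{R}$ so that the complex balancing condition \eqref{cb2} can be invoked term by term. The key step will be to reorganize $\dot G$ into a sum over reactions of the form
\begin{equation}
\label{gdot}
\dot{G} = \sum_{(i,j) \in \mathcal{R}} k(i,j)\, (\mathbf{x}^*)^{\mathbf{z}_i} \left[ \langle \mathbf{z}_i, \mathbf{u} \rangle e^{\langle \mathbf{z}_i, \mathbf{u} \rangle} - e^{\langle \mathbf{z}_i, \mathbf{u} \rangle} + 1 \right] - (\text{grouped complex terms}),
\end{equation}
after which the complex balancing identity collapses the surviving contributions into a standard inequality of the form $\sum (a \ln(a/b) - a + b) \geq 0$ applied to the complex-indexed quantities. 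I would invoke the elementary inequality $w(\ln w - 1) + 1 \geq 0$ (with equality iff $w = 1$) to conclude $\dot{G} \leq 0$ on $\mathbb{R}^m_{>0}$, with $\dot{G} = 0$ exactly when $\langle \mathbf{z}_i, \mathbf{u}\rangle$ agrees across every linkage class, i.e.\ when $\mathbf{x}$ is itself a complex balanced equilibrium.

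Finally I would convert the Lyapunov inequality into asymptotic stability. Since $G$ is strictly convex with a strict minimum at $\mathbf{x}^*$ relative to $\mathsf{C}_{\mathbf{x}_0}$ and $\dot{G} \leq 0$, $\mathbf{x}^*$ is Lyapunov stable; local asymptotic stability then follows from LaSalle's invariance principle, since the largest invariant set on which $\dot{G} = 0$ within a neighborhood is the single point $\mathbf{x}^*$. The main obstacle in this scheme is purely local-to-global bookkeeping for the invariance argument: LaSalle requires a compact positively invariant neighborhood, and establishing that the sublevel sets $\{G \leq c\} \cap \mathsf{C}_{\mathbf{x}_0}$ provide such neighborhoods near $\mathbf{x}^*$ is what distinguishes the asymptotic statement here from the global attractor conjecture discussed later. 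Accordingly I would restrict attention to a small sublevel set where compactness and the invariance characterization are immediate, which suffices for \emph{local} asymptotic stability as stated.
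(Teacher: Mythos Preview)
The paper does not actually prove Theorem~\ref{stabilitytheorem}; it is stated as ``the main result of \cite{H-J1}'' and cited without argument. Your proposal is essentially the classical Horn--Jackson proof via the pseudo-Helmholtz free energy, which is exactly the function the paper later invokes as $L(\mathbf{x})$ in \eqref{globallyapunovfunction}, so you have correctly reconstructed the argument the paper defers to.

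One remark on the sketch itself: the computation of $\dot G$ you outline in \eqref{gdot} is the right idea but is left vague at the crucial step. The clean way to organize it is to write $\dot G = \sum_{(i,j)\in\mathcal{R}} k(i,j)(\mathbf{x}^*)^{\mathbf{z}_i} e^{\langle \mathbf{z}_i,\mathbf{u}\rangle}\big(\langle \mathbf{z}_j,\mathbf{u}\rangle - \langle \mathbf{z}_i,\mathbf{u}\rangle\big)$, apply the inequality $e^a(b-a)\le e^b - e^a$ termwise, and then observe that the resulting sum $\sum_{(i,j)} k(i,j)(\mathbf{x}^*)^{\mathbf{z}_i}\big(e^{\langle \mathbf{z}_j,\mathbf{u}\rangle}-e^{\langle \mathbf{z}_i,\mathbf{u}\rangle}\big)$ vanishes identically by the complex balancing condition \eqref{cb2}. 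This gives $\dot G\le 0$ with equality precisely when $\langle \mathbf{z}_i,\mathbf{u}\rangle = \langle \mathbf{z}_j,\mathbf{u}\rangle$ for all $(i,j)\in\mathcal{R}$, which is the characterization you want. Your ``grouped complex terms'' formulation and the inequality $w(\ln w -1)+1\ge 0$ are not quite the right tools here; the exponential inequality above is what makes the cancellation transparent.
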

\noindent This is a very powerful result in that it gives sufficient conditions for an extremely predictible---and often desirable in practice---form of behaviour based solely on the nature of the equilibrium points. (Further relationships between the complex balancing condition and the reaction graph of a mechanism are given in \cite{F1,H}.)

The result, however, is local in nature and insufficient to eliminate the possibility of an $\omega$-limit point lying on the boundary of $\mathbb{R}_{>0}^m$. The hypothesis that the unique positive equilibrium point is in fact a \emph{global attractor} for the invariant set $\mathsf{C}_{\mathbf{x}_0}$ was first proposed by Horn in \cite{H1}. For completeness, we state the conjecture here as stated in \cite{C-D-S-S}.

\begin{proposition}[Global Attractor Conjecture]
\label{globalattractorconjecture}
For any complex balanced system and any starting point $\mathbf{x}_0 \in \mathbb{R}_{>0}^m$, the associated complex balanced equilibrium point $\mathbf{x}^*$ of $\mathsf{C}_{\mathbf{x}_0}$ is a global attractor of $\mathsf{C}_{\mathbf{x}_0}$.
\end{proposition}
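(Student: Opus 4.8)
The plan is to combine the classical Lyapunov-function machinery of Horn and Jackson with the stratum-based repulsion arguments of \cite{C-D-S-S}, reducing the conjecture to a statement about \emph{persistence}: that no trajectory originating in $\mathbb{R}_{>0}^m$ has an $\omega$-limit point on $\partial \mathbb{R}_{\geq 0}^m$. First I would recall the strict Lyapunov function guaranteed by complex balancing, the pseudo-Helmholtz free energy
\begin{equation*}
V(\mathbf{x}) = \sum_{i=1}^m \left[ x_i \ln\frac{x_i}{x_i^*} - x_i + x_i^* \right],
\end{equation*}
whose existence underlies Theorem \ref{stabilitytheorem}. Using Lemma \ref{lemma2} one checks that $\dot V(\mathbf{x}) \leq 0$ on $\mathbb{R}_{>0}^m$ with equality exactly at the complex balanced equilibria, and that $V$ is coercive on $\mathsf{C}_{\mathbf{x}_0}$ so that its sublevel sets are compact. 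Hence every trajectory is bounded, exists for all $t \geq 0$, and has a nonempty $\omega$-limit set; by LaSalle's invariance principle this set is contained in the equilibrium set.

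Second, I would invoke the uniqueness of the positive equilibrium within $\mathsf{C}_{\mathbf{x}_0}$ (Theorem \ref{stabilitytheorem}) to conclude that either $\omega(\mathbf{x}_0) = \{\mathbf{x}^*\}$, in which case the trajectory converges and we are done, or $\omega(\mathbf{x}_0) \subseteq \partial \mathbb{R}_{\geq 0}^m$. In the latter case the results of \cite{S-M} show that any such boundary $\omega$-limit point is itself a complex balanced equilibrium of the subnetwork supported on its nonzero coordinates, and \cite{A,A3} restrict the admissible faces to those determined combinatorially by the reaction graph. This localizes the problem to a finite list of candidate faces $F$ of the orthant.

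Third---and this is where the stratum methodology enters---I would, for each candidate face $F$, partition a neighborhood of $F$ inside the positive orthant into strata and on each stratum construct a linear function $H(\mathbf{x}) = \langle \alpha, \mathbf{x} \rangle$, with $\alpha$ chosen so that $H$ vanishes on $F$ and $\dot H > 0$ whenever $\mathbf{x}$ lies in the stratum and is close to $F$. Such an $H$ forces trajectories to increase their distance from $F$, so $F$ can contain no $\omega$-limit point. Patching these local repulsion estimates over all strata adjacent to $F$, and over the finitely many candidate faces, would rule out boundary $\omega$-limit points entirely and thereby establish persistence and, with the first two steps, the conjecture.

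The hard part will be the third step: the sign condition $\dot H > 0$ cannot in general be arranged by a single $\alpha$ across an entire face, which is precisely why the orthant must be subdivided into strata, and the real difficulty is guaranteeing that a suitable $\alpha$ exists on \emph{every} stratum and that the resulting repulsion estimates are uniform enough to glue together---in particular near the lower-dimensional intersections of faces and on faces that are themselves forward-invariant because they carry their own boundary equilibria. Controlling these degenerate regions for arbitrary complex balanced networks, rather than only the bounded two-dimensional detailed balanced case treated in \cite{C-D-S-S}, is the central obstruction, and it is where a complete proof of the conjecture remains out of reach.
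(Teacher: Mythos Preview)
The statement you are asked to prove is not proved in the paper: it is the Global Attractor Conjecture, explicitly labeled as such, and the paper states that ``to date no fully general proof of the conjecture exists.'' There is therefore no paper proof to compare against; the paper instead establishes partial results (Theorem~\ref{maintheorem}, Corollary~\ref{sob}, Corollary~\ref{mainresult}) under additional hypotheses.

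Your proposal is not a proof either, and you correctly say so in your final paragraph. The outline you give---reduce to persistence via the Horn--Jackson Lyapunov function and Theorem~\ref{wlimitsettheorem}, restrict attention to faces indexed by semi-locking sets via Lemma~\ref{lemma461}, and then attempt to rule out boundary $\omega$-limit points using stratum-dependent linear functionals $H(\mathbf{x})=\langle\alpha,\mathbf{x}\rangle$---is exactly the program the paper carries out. The paper's Theorem~\ref{maintheorem} supplies the suitable $\alpha$ on each individual stratum for general complex balanced systems (extending \cite{C-D-S-S} from the detailed balanced case), and Corollary~\ref{sob} together with Theorem~\ref{bigtheorem} gives checkable sufficient conditions under which a common $\alpha$ works across all strata adjacent to a given face, which then yields the conjecture for that system. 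The obstruction you name---that a single $\alpha$ cannot in general be found across all adjacent strata and that the local repulsion estimates do not obviously glue, especially near lower-dimensional boundary pieces---is precisely the obstruction the paper isolates in its concluding section and illustrates with Example~2. In short, your proposal accurately summarizes the strategy and its known limitation, but it does not, and does not claim to, close the gap.
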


To date no fully general proof of the conjecture exists; however, several restrictions on long-term behaviour of solutions and special cases under which the conjecture holds are known. In \cite{S-M}, the authors prove the following restriction on the $\omega$-limit set.
\begin{theorem}[Theorem 3.2, \cite{S-M}]
\label{wlimitsettheorem}
For any $\mathbf{x}_0 \in \mathbb{R}_{>0}^m$ of a complex balanced mass-action system, the $\omega$-limit set consists either of complex balanced equilibrium points lying on $\partial \mathbb{R}_{>0}^m$ or of a single positive point of complex balanced equilibrium.
\end{theorem}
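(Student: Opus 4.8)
\noindent The plan is to prove the result by a Lyapunov-function argument built around the \emph{pseudo-Helmholtz} function of Horn and Jackson, combined with LaSalle's invariance principle and a face-by-face analysis of the boundary. Fix the unique positive equilibrium $\mathbf{x}^* \in \mathsf{C}_{\mathbf{x}_0}$ guaranteed by Theorem \ref{stabilitytheorem}; by hypothesis and Lemma \ref{lemma2} it is complex balanced. Define
\[ H(\mathbf{x}) = \sum_{i=1}^m \left( x_i \ln \frac{x_i}{x_i^*} - x_i + x_i^* \right), \]
with the convention $0 \ln 0 = 0$, so that $H$ extends continuously to all of $\mathbb{R}_{\geq 0}^m$. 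First I would record the elementary properties of $H$: it is a sum of strictly convex one-variable functions, so $H \geq 0$ with $H(\mathbf{x}) = 0$ if and only if $\mathbf{x} = \mathbf{x}^*$, and each summand tends to $+\infty$ as $x_i \to \infty$, so the sublevel sets $\{ H \leq c \}$ are bounded. In particular the forward trajectory from $\mathbf{x}_0$ is bounded and $\omega(\mathbf{x}_0)$ is nonempty, compact, connected, and invariant.

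\noindent The crucial computation is the dissipation identity. Writing $\kappa_{ij} = k(i,j)(\mathbf{x}^*)^{\mathbf{z}_i}$ and $\xi_i = \mathbf{x}^{\mathbf{z}_i}/(\mathbf{x}^*)^{\mathbf{z}_i}$, a direct calculation from (\ref{de}), using the complex balance relation (\ref{cb2}) (which rewrites as $\sum_j \kappa_{ij} = \sum_j \kappa_{ji}$ for each $i$), gives on the open orthant
\[ \dot H = -\sum_{(i,j) \in \mathcal{R}} \kappa_{ij} \left( \xi_i \ln \frac{\xi_i}{\xi_j} - \xi_i + \xi_j \right) \leq 0, \]
since each summand is nonnegative by the inequality $a \ln(a/b) - a + b \geq 0$ for $a,b > 0$. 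Equality holds precisely when $\xi_i = \xi_j$ for every $(i,j) \in \mathcal{R}$, that is, exactly at the complex balanced equilibria. Because $H$ is nonincreasing and bounded below, $H(\mathbf{x}(t)) \downarrow c$ for some $c \geq 0$, and by continuity $H \equiv c$ on $\omega(\mathbf{x}_0)$.

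\noindent The dichotomy then follows. Suppose first that $\omega(\mathbf{x}_0)$ contains a point $\mathbf{y} \in \mathbb{R}_{>0}^m$. By invariance the entire trajectory through $\mathbf{y}$ lies in $\omega(\mathbf{x}_0)$, where $H$ is constant, so $\dot H \equiv 0$ along it and $\mathbf{y}$ is a positive complex balanced equilibrium lying in $\mathsf{C}_{\mathbf{x}_0}$. The uniqueness clause of Theorem \ref{stabilitytheorem} forces $\mathbf{y} = \mathbf{x}^*$, whence $c = H(\mathbf{x}^*) = 0$; but $H^{-1}(0) = \{\mathbf{x}^*\}$, so $\omega(\mathbf{x}_0) = \{\mathbf{x}^*\}$. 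This yields the single-positive-point alternative and simultaneously rules out any mixed $\omega$-limit set containing both interior and boundary points.

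\noindent It remains to treat the case $\omega(\mathbf{x}_0) \subseteq \partial \mathbb{R}_{>0}^m$ and to show each such limit point is a complex balanced equilibrium, and I expect this to be the main obstacle. The gradient of $H$ has components $\ln(x_i/x_i^*) \to -\infty$ as $x_i \to 0$, so $\dot H$ cannot be evaluated directly at boundary points and the dissipation identity degenerates. The plan is to fix $\mathbf{w} \in \omega(\mathbf{x}_0)$, set $I = \{ i : w_i > 0 \}$, and consider the invariant trajectory through $\mathbf{w}$, which remains in $\partial \mathbb{R}_{>0}^m$. On the relative interior of the corresponding coordinate face only the complexes supported on $I$ contribute nonvanishing monomials, and invariance of the boundary forces the net flux in every direction $i \notin I$ to vanish; restricting to the surviving reactions yields a genuine mass-action system in the variables $\{ x_i : i \in I \}$ on which $H$ is again smooth. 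The goal is then to show, by a termwise limiting analysis of the now-finite dissipation along this restricted trajectory, that the surviving fluxes balance, i.e.\ that (\ref{cb2}) holds at $\mathbf{w}$ with the convention that complexes not supported on $I$ carry zero monomial value. The subtlety is that the reduced boundary network need not itself be complex balanced, so one cannot simply reapply the interior argument; controlling the limit of the degenerate dissipation terms, and confirming that the balancing inherited from $\mathbf{x}^*$ descends to the complexes active on the face, is where the real work lies.
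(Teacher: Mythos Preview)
The paper does not contain a proof of this statement: Theorem~\ref{wlimitsettheorem} is quoted from \cite{S-M} as background and is used later (in the proof of Corollary~\ref{sob}) without being reproved here. So there is no ``paper's own proof'' to compare against.

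That said, your outline follows the standard route of \cite{S-M}: the pseudo-Helmholtz function $H$, the dissipation identity $\dot H\le 0$ with equality exactly at complex balanced points, constancy of $H$ on $\omega(\mathbf{x}_0)$, and the resulting interior dichotomy are all correct and match the original argument. Your treatment of the interior case is complete.

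For the boundary case you have correctly located the difficulty---$\nabla H$ is singular on $\partial\mathbb{R}_{>0}^m$, so the dissipation identity cannot be invoked directly---and your plan of passing to the face through $\mathbf{w}$ and analysing the surviving reactions is the right one. What remains genuinely unfinished in your sketch is the step ``by a termwise limiting analysis of the now-finite dissipation\ldots\ the surviving fluxes balance'': you have not said how to control the divergent terms $\ln(x_i/x_i^*)$ for $i\notin I$ in the limit, nor why the inherited relations from (\ref{cb2}) at $\mathbf{x}^*$ force (\ref{cb2}) at $\mathbf{w}$ for the complexes supported on $I$. In \cite{S-M} this is done by showing that the restriction of $H$ to the face is still a Lyapunov function for the face dynamics (which are themselves mass-action), so that $\dot H=0$ on the invariant $\omega$-limit set forces the equality case termwise among the non-vanishing monomials; the complexes not supported on $I$ contribute zero on both sides of (\ref{cb2}) automatically. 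Filling in that face-restriction argument would complete your proof.
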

\noindent An important consequence of this result is that in order to show the global attractor conjecture holds for a complex balanced system it is sufficient to show that $\omega(\mathbf{x}_0) \cap \partial \mathbb{R}_{>0}^m = \emptyset$ (see also Proposition 19 of \cite{C-D-S-S} and the consequent discussion).

Significant work has also been done recently restricting where $\omega$-limit points may lie on $\partial \mathbb{R}_{>0}^m$. In particular, the following concept (called a \emph{semi-locking set} in \cite{A} and a \emph{siphon} in \cite{A3}) has been used to restrict such points. (The set $L_I$ is formally introduced by Definition \ref{face} in Section \ref{section3}.)

\begin{definition}
\label{semilockingset}
The nonempty index set $I \subseteq \mathcal{S}$ is called a \textbf{semi-locking set} if for every reaction where an element from $I$ is in the product complex, an element from $I$ is also in the reactant complex.
\end{definition}

\begin{lemma}[Theorem 2.5, \cite{A}]
\label{lemma461}
Consider the non-empty index set $I \subseteq \mathcal{S}$. If there exists a $\mathbf{x}_0 \in \mathbb{R}_{>0}^m$ such that $\omega(\mathbf{x}_0) \cap L_I \not= \emptyset$, then $I$ is a semi-locking set.
\end{lemma}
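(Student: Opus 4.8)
The plan is to argue the contrapositive. Recall that $L_I$ denotes the portion of $\partial \mathbb{R}_{\geq 0}^m$ on which exactly the coordinates indexed by $I$ vanish, so that a point $\mathbf{p} \in L_I$ satisfies $p_k = 0$ for $k \in I$ and $p_k > 0$ for $k \notin I$. Suppose, for contradiction, that $\mathbf{p} \in \omega(\mathbf{x}_0) \cap L_I$ while $I$ is \emph{not} a semi-locking set. By Definition \ref{semilockingset}, there is then a reaction $\mathcal{C}_i \to \mathcal{C}_j$ in $\mathcal{R}$ and a species index $\ell \in I$ such that $\ell$ appears in the product complex $\mathcal{C}_j$ (i.e. $z_{j\ell} > 0$) but no species of $I$ appears in the reactant complex $\mathcal{C}_i$ (i.e. $z_{ik} = 0$ for all $k \in I$). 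The strategy is to use this single offending reaction to show that the concentration of $\mathcal{A}_\ell$ is strictly increasing at $\mathbf{p}$, and then to contradict the invariance of the $\omega$-limit set.

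First I would evaluate $\dot{x}_\ell$ at $\mathbf{p}$ via (\ref{de}), namely $\dot{x}_\ell(\mathbf{p}) = \sum_{(a,b) \in \mathcal{R}} k(a,b)(z_{b\ell} - z_{a\ell}) \mathbf{p}^{\mathbf{z}_a}$, and split the sum according to whether the reactant complex $\mathcal{C}_a$ contains a species of $I$. If some $k \in I$ has $z_{ak} > 0$, then $\mathbf{p}^{\mathbf{z}_a} = 0$ because the factor $p_k^{z_{ak}}$ vanishes, so such terms drop out. The surviving terms are exactly those whose reactant complex has no $I$-support; for these $\mathbf{p}^{\mathbf{z}_a} = \prod_{k \notin I} p_k^{z_{ak}} > 0$, and since $\ell \in I$ we have $z_{a\ell} = 0$, so each surviving term equals $k(a,b)\,z_{b\ell}\,\mathbf{p}^{\mathbf{z}_a} \geq 0$. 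The offending reaction $(i,j)$ is among these surviving terms and contributes $k(i,j)\,z_{j\ell}\,\mathbf{p}^{\mathbf{z}_i} > 0$, so the entire sum is strictly positive: $\dot{x}_\ell(\mathbf{p}) > 0$. This is a routine computation once the splitting is set up.

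The remaining step is to convert $\dot{x}_\ell(\mathbf{p}) > 0$ into a contradiction, and I expect this to be the main obstacle, being a matter of dynamical-systems bookkeeping rather than algebra. Here I would invoke two facts: that the nonnegative orthant is forward invariant for (\ref{de}) (standard for mass-action systems), so that $\omega(\mathbf{x}_0)$, being contained in the closure of a forward orbit lying in $\mathbb{R}_{>0}^m$, is itself contained in $\mathbb{R}_{\geq 0}^m$; and that $\omega$-limit sets are invariant under the flow. Since $\mathbf{f}$ is polynomial, the solution $\phi_t(\mathbf{p})$ exists on some interval $(-\varepsilon, \varepsilon)$, and by invariance $\phi_t(\mathbf{p}) \in \omega(\mathbf{x}_0) \subseteq \mathbb{R}_{\geq 0}^m$ for all such $t$. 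But $(\phi_0(\mathbf{p}))_\ell = p_\ell = 0$ together with $\dot{x}_\ell(\mathbf{p}) > 0$ forces $(\phi_t(\mathbf{p}))_\ell < 0$ for small $t < 0$, contradicting the nonnegativity just established. This proves the contrapositive and hence the lemma.

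The care required in the last paragraph is to justify both that $\omega(\mathbf{x}_0) \subseteq \mathbb{R}_{\geq 0}^m$ and that the \emph{backward} portion of the trajectory through $\mathbf{p}$ genuinely lies in the $\omega$-limit set, since global backward existence is unavailable and only the local backward existence afforded by smoothness of $\mathbf{f}$ may be used (which, fortunately, is all that is needed). An alternative route that avoids backward invariance is to observe that $\dot{x}_\ell > 0$ holds on a whole relative neighborhood of $\mathbf{p}$ in $L_I$ by continuity, and then argue directly that a trajectory in $\mathbb{R}_{>0}^m$ cannot accumulate at such a $\mathbf{p}$; the invariance argument above, however, is the cleaner formulation.
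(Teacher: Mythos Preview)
The paper does not supply its own proof of this lemma; it is simply cited as Theorem~2.5 of \cite{A}. Your argument is correct and is essentially the standard one used in that reference: if $I$ fails to be semi-locking, you isolate a reaction whose reactant complex is supported outside $I$ but whose product complex contains some $\ell \in I$, compute $\dot{x}_\ell(\mathbf{p}) > 0$ at any $\mathbf{p} \in L_I$ by the splitting you describe, and then contradict invariance of $\omega(\mathbf{x}_0)$ in $\mathbb{R}_{\geq 0}^m$. Your attention to the backward-existence subtlety is appropriate and the alternative forward-time argument you sketch (using continuity of $\dot{x}_\ell$ on a neighborhood in $L_I$) is also valid and is closer in spirit to how such results are sometimes phrased in the persistence literature.
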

\noindent Consequently, to eliminate the possibility of trajectories approaching $\partial \mathbb{R}_{>0}^m$, it is sufficient to look at sets $L_I$ corresponding to semi-locking sets. Since semi-locking sets can be determined from the reaction graph of the mechanism alone, this is a particularly useful result.

Our work in the next section will focus on the approach used and results obtained in \cite{C-D-S-S}. In this paper, the authors use the novel approach of dividing the state space $\mathbb{R}_{>0}^m$ into \emph{strata}, which are naturally arising partitions of $\mathbb{R}_{>0}^m$. The authors show that within these regions, the trajectories of any detailed balanced system are repelled from the boundary by a linear Lyapunov function of the form $H(\mathbf{x}(t))=\langle \alpha, \mathbf{x}(t) \rangle$. They use this to justify the following partial result on global stability.

\begin{theorem}[Theorem 23, \cite{C-D-S-S}]
\label{detailedstability}
Consider a detailed balancing system whose stoichiometric substace $S$ is two-dimensional and assume that the positive compatibility class $\mathsf{C}_{\mathbf{x}_0}$ is bounded. Then the unique positive equilibrium point $\mathbf{x}^*$ of $\mathsf{C}_{\mathbf{x}_0}$ is a global attractor for $\mathsf{C}_{\mathbf{x}_0}$.
\end{theorem}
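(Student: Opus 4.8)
The plan is to convert the global-attractor statement into a statement purely about boundary behaviour and then to use linear Lyapunov functions supported on the relevant boundary faces to rule out limit points there. Since $\mathsf{C}_{\mathbf{x}_0}$ is bounded, the solution $\mathbf{x}(t)$ with $\mathbf{x}(0)=\mathbf{x}_0$ exists for all $t\geq 0$ and remains in a compact subset of $\overline{\mathsf{C}_{\mathbf{x}_0}}$, so that $\omega(\mathbf{x}_0)$ is nonempty, compact, connected and invariant. A detailed balanced equilibrium is complex balanced, so Theorem \ref{wlimitsettheorem} applies: $\omega(\mathbf{x}_0)$ consists either of complex balanced equilibria on $\partial\mathbb{R}_{>0}^m$ or of a single positive complex balanced equilibrium. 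In the latter case this point must be the unique equilibrium $\mathbf{x}^*$ of Theorem \ref{stabilitytheorem}, and its asymptotic stability then forces $\mathbf{x}(t)\to\mathbf{x}^*$. Hence it suffices to prove that $\omega(\mathbf{x}_0)\cap\partial\mathbb{R}_{>0}^m=\emptyset$.

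I would argue by contradiction, assuming $\omega(\mathbf{x}_0)$ contains a boundary point. By Lemma \ref{lemma461}, every such point lies on a face $L_I$ for some semi-locking set $I\subseteq\mathcal{S}$. Because $S$ is two-dimensional and $\mathsf{C}_{\mathbf{x}_0}$ is bounded, $\partial\mathsf{C}_{\mathbf{x}_0}$ is the boundary of a polygon, a finite union of edges and vertices, each contained in some such $L_I$. The strategy is therefore to show that every face $L_I$ coming from a semi-locking set is repelling, so that no trajectory originating in $\mathsf{C}_{\mathbf{x}_0}$ can accumulate on it.

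To each such $I$ I would associate a vector $\alpha\in\mathbb{R}_{\geq 0}^m$ with $\alpha_l>0$ for $l\in I$ and $\alpha_l=0$ otherwise, so that $H(\mathbf{x})=\langle\alpha,\mathbf{x}\rangle$ vanishes on $L_I$ and is positive on the interior. Differentiating along (\ref{de}) gives
\[
\dot H=\sum_{(i,j)\in\mathcal{R}}k(i,j)\,\langle\alpha,\mathbf{z}_j-\mathbf{z}_i\rangle\,\mathbf{x}^{\mathbf{z}_i}.
\]
The semi-locking condition is exactly what controls this sum: by Definition \ref{semilockingset}, any reaction whose reactant $\mathbf{z}_i$ has no species in $I$ also has a product $\mathbf{z}_j$ with no species in $I$, so $\langle\alpha,\mathbf{z}_j-\mathbf{z}_i\rangle=0$ and such reactions are inert. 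Only reactions whose reactant involves a species of $I$ survive, and for these $\mathbf{x}^{\mathbf{z}_i}\to 0$ as $\mathbf{x}\to L_I$. Pairing each surviving reaction with its reverse (which exists by detailed balancing) and using (\ref{db}) to eliminate the rate constants in favour of $\mathbf{x}^*$ reduces $\dot H$ to a sum whose sign is governed by the ordering of the surviving monomials. Restricting to a single stratum fixes this ordering, and $\alpha$ can be chosen so the dominant surviving term has positive coefficient, yielding $\dot H>0$ on a set $\{0<H\leq\varepsilon\}$ within that stratum; this is precisely the sense in which $L_I$ is repelling.

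The main obstacle will be carrying out this sign analysis uniformly over all strata adjacent to a given face and patching the estimates together, especially at the vertices of the polygon, where two edges and hence possibly two distinct semi-locking sets meet and the dominant monomials change. Once repulsion is established along every boundary edge and at every vertex, no trajectory in $\mathsf{C}_{\mathbf{x}_0}$ can have a boundary $\omega$-limit point, contradicting the assumption; hence $\omega(\mathbf{x}_0)\subseteq\mathbb{R}_{>0}^m$ and, by the reduction above, $\omega(\mathbf{x}_0)=\{\mathbf{x}^*\}$. It is the two-dimensionality of $S$ that guarantees the boundary is a finite union of edges and vertices cleanly covered by the strata; the breakdown of this combinatorial simplification in higher dimensions is what obstructs a general argument and motivates the discussion of Proposition \ref{globalattractorconjecture}.
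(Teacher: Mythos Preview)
Your outline follows the same route as the paper (and \cite{C-D-S-S}, to which the paper defers for this particular statement): reduce via Theorem~\ref{wlimitsettheorem} and Lemma~\ref{lemma461} to excluding boundary $\omega$-limit points, then use linear functionals $H(\mathbf{x})=\langle\alpha,\mathbf{x}\rangle$ supported on $I$ together with the stratification to show repulsion from each relevant $L_I$, and finally exploit two-dimensionality to patch. Your first paragraph's reduction is exactly right.

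Where your sketch departs from the actual argument is the mechanism for selecting $\alpha$ within a stratum. You write that ``$\alpha$ can be chosen so the dominant surviving term has positive coefficient,'' which reads as a leading-order argument giving $\dot H>0$ only on a strip $\{0<H\leq\varepsilon\}$. That is not what is done and would not suffice for the patching, which needs the sign of $\langle\alpha,\mathbf{f}(\mathbf{x})\rangle$ on the \emph{entire} closed stratum $\overline{\mathcal{S}}_\mu$. The correct construction is Lemma~\ref{alpha} here (Lemma~17 of \cite{C-D-S-S}): Farkas' lemma, applied to the monomial inequalities defining $\mathcal{S}_\mu$, produces an $\alpha$ supported on $I$ with $\langle\mathbf{z}_{\mu(i)}-\mathbf{z}_{\mu(i+1)},\alpha\rangle\geq 0$ for every consecutive pair in the ordering. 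After pairing reactions via (\ref{db}) as you describe, this makes \emph{every} term in $\langle\alpha,\mathbf{f}(\mathbf{x})\rangle$ have the correct sign throughout $\overline{\mathcal{S}}_\mu$, not just the dominant one (Corollary~18 of \cite{C-D-S-S}; Theorem~\ref{maintheorem} here). Your preliminary filtering of ``inert'' reactions via semi-locking is correct but becomes redundant once $\alpha$ is constructed this way.

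You rightly flag the patching across strata and at polygon vertices as the crux, and this is exactly where two-dimensionality enters. In the paper's framework this is packaged as an induction on $|I|$ (Theorem~\ref{bigtheorem}): vertices of $\overline{\mathsf{C}_{\mathbf{x}_0}}$ are excluded from $\omega(\mathbf{x}_0)$ first, and then each open edge is handled using compactness of $\omega(\mathbf{x}_0)\cap\overline{L}_I$ together with the per-stratum inequality. In dimension two the induction has only these two layers, which is why it closes; your sketch points at this structure but does not supply it.
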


In the following section, we will generalize the methodology used in obtaining this result to complex balanced systems. We feel these results represent not only a useful tool for select special cases but a substantial theoretical step toward confirming Proposition \ref{globalattractorconjecture}.

\section{Global Stability}
\label{sectionsection}

In this section, we show how the results of \cite{C-D-S-S} can be extended from detailed balanced systems to complex balanced systems. In Section \ref{section3}, we introduce the necessary background material and generalize the notion of stratifying the state space $\mathbb{R}_{>0}^m$. In Section \ref{section4} we derive the analogous result to Corollary 18 of \cite{C-D-S-S} for cyclic complex balanced systems, and in Section \ref{section5} we extend this result to general complex balanced systems. In Section \ref{section6} we give a few concrete results for determining global stability of complex balanced systems. These results are applied in Section \ref{section7} to a few specific examples.

\subsection{Permutations, Faces and Strata}
\label{section3}

In this section, we introduce the concept of stratifying the positive stoichiometric compatibility classes $\mathsf{C}_{\mathbf{x}_0}$ as it has been used in the literature so far. We then generalize the concept in a natural way so that it can be applied to complex balanced chemical reaction systems.

The idea of stratifying $\mathsf{C}_{\mathbf{x}_0}$ was first introduced in \cite{C-D-S-S} with applications to detailed balanced systems. For such systems, the authors used the sets
\begin{equation}
\label{cdssstrata}
\mathcal{S} = \left\{ \mathbf{x} \in \mathsf{C}_{\mathbf{x}_0} \; \left| \; \left( \frac{\mathbf{x}}{\; \mathbf{x}^*} \right)^{\mathbf{z}_i} > \left( \frac{\mathbf{x}}{\; \mathbf{x}^*} \right)^{\mathbf{z}_j} \right. \mbox{ for } (i,j) \in E' \right\}
\end{equation}
where $\mathbf{x}^*$ is the unique positive equilibrium concentration in $\mathsf{C}_{\mathbf{x}_0}$. The sets $E' \subset \mathcal{R}$ were chosen to contain exactly one of the index pairs $(i,j)$ or $(j,i)$ out of each detailed balanced pair given in Definition \ref{detailedbalanced}. The graph of $E'$ was also required to be acyclic.

While our notion of stratification is based on that presented in \cite{C-D-S-S}, some differences arise. We consider a \emph{complete} ordering of all the complexes in the system, rather than pairwise ordering as in (\ref{cdssstrata}), and we do not require any conditions on the reaction graph. We also keep the notion of stratification general by considering the state space $\mathbb{R}_{>0}^m$ rather than each $\mathsf{C}_{\mathbf{x}_0}$.

First of all, we will need to introduce the concept of a permutation operator.

\begin{definition}
\label{permutation}
Consider the set $I = \left\{1, 2, \ldots, n \right\}$. The operator $\mu: I \mapsto I$ is called a \textbf{permutation operator} if it is bijective. Furthermore, we will say that the permutation operator $\mu$ implies the \textbf{ordering}
\[\mu(i) \; \succ \; \mu(i+1), \hspace{0.2in} i = 1, \ldots, n-1\]
on the set $\left\{ 1, 2, \ldots, n \right\}$.
\end{definition}

A permutation operator simply shuffles the elements of a set. To each such operator we can define a stratum in the following way.

\begin{definition}
\label{strata}
Given a permutation operator $\mu: I \mapsto I$ we define the \textbf{stratum} associated with $\mu$ to be
\begin{equation}
\label{1}
\mathcal{S}_\mu = \left\{ \mathbf{x} \in \mathbb{R}_{>0}^m \; \left| \; \left( \frac{\mathbf{x}}{\; \mathbf{x}^*}\right)^{\mathbf{z}_{\mu(i)}} > \left( \frac{\mathbf{x}}{\; \mathbf{x}^*}\right)^{\mathbf{z}_{\mu(i+1)}} \mbox{for } i = 1, \ldots, n-1 \right. \right\}
\end{equation}
where $\mathbf{x}^*$ is an arbitrary positive equilibrium concentration permitted by the system.
\end{definition}

This is a more general notion of strata than that given by (\ref{cdssstrata}) (i.e. for some systems, strata according to (\ref{cdssstrata}) are further stratified by (\ref{1})); however, it is natural for the analysis we undertake in the remainder of this paper.


Strata defined in this way have some nice properties, most importantly, that each $\mathbf{x} \in \mathbb{R}^m_{>0}$ either belongs to a unique stratum $\mathcal{S}_\mu$ or the boundary separating one or more strata.


It is also worth noting that not every permutation generates a non-empty stratum. For example, for a system containing the complexes $\mathcal{C}_1 = \mathcal{O}, \mathcal{C}_2 = \mathcal{A}_1$, $\mathcal{C}_3 = \mathcal{A}_2,$ and $\mathcal{C}_4 = \mathcal{A}_1 + \mathcal{A}_2$ there are no points satisfying
\[\frac{x_2}{x_2^*} > \frac{x_1}{x_1^*} \frac{x_2}{x_2^*} > \frac{x_1}{x_1^*} > 1\]
since the first and last conditions imply $x_1^*>x_1$ and $x_1>x_1^*$, respectively. That is to say, for the permutation $\mu([1,2,3,4])=[3,4,2,1]$ we have $\mathcal{S}_\mu = \emptyset$ ($\mu([1,2,3,4])=[3,4,2,1]$ will be our short-hand for $\mu(1)=3$, $\mu(2)=4$, $\mu(3)=2$, $\mu(4)=1$). In this paper, we will consider only those permutation operators $\mu$ which generate non-empty strata $\mathcal{S}_\mu$. (This is related to, although not equivalent to, the condition that $E'$ contain no cycles in (\ref{cdssstrata}).)




To answer the question of global stability, we are interested in the behaviour of trajectories near the boundary of the state space $\mathbb{R}_{>0}^m$. Such discussion is aided by partitioning $\partial \mathbb{R}_{>0}^m$ into the following sets $L_I$. (These sets are defined similarly in \cite{A}, \cite{A3}, and \cite{C-D-S-S}. In \cite{A-S}, $L_I$ is denoted $Z_I$. In the standard theory of convex polytopes, $L_I$ is referred to as the relative interior of a face.)

\begin{definition}
\label{face}
Given an index set $I \subseteq \left\{ 1, 2, \ldots, m \right\}$, we will define the set $L_I$ to be
\[L_I = \left\{ \mathbf{x} \in \mathbb{R}_{\geq 0}^m \; | \; x_i = 0 \mbox{ for } i \in I, \mbox{ and } x_i > 0 \mbox{ for } i \not\in I \right\}.\]
\end{definition}


\noindent It should be noted that according to this definition each $\mathbf{x} \in \partial \mathbb{R}_{>0}^m$ can be placed into exactly one $L_I$ so that the $L_I$ uniquely and completely decompose $\partial \mathbb{R}_{>0}^m$.



The following result relates strata and the sets $L_I$. It is based on Lemma 17 of \cite{C-D-S-S}.


\begin{lemma}
\label{alpha}
If $\overline{\mathcal{S}}_\mu \cap L_I \not= \emptyset$ then there exists an $\alpha \in \mathbb{R}^m$ satisfying
\begin{equation}
\label{33}
\begin{array}{l} \alpha_i < 0, \mbox{ for } i \in I \\ \alpha_i = 0, \mbox{ for } i \not\in I \end{array}
\end{equation}
and
\begin{equation}
\label{34}
\langle \mathbf{z}_{\mu(i)} - \mathbf{z}_{\mu(i+1)}, \alpha \rangle \geq 0, \hspace{0.2in} \mbox{for } i = 1, \ldots, n-1.
\end{equation}
\end{lemma}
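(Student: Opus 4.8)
The plan is to pass to logarithmic coordinates, where both the stratum and the desired $\alpha$ become linear objects, and then invoke a theorem of the alternative (Farkas' lemma) to produce $\alpha$. Write $\mathbf{w}$ for the vector with components $w_j = \ln(x_j / x_j^*)$, and set $\mathbf{d}_i = \mathbf{z}_{\mu(i)} - \mathbf{z}_{\mu(i+1)}$ for $i = 1, \ldots, n-1$. Since $\ln$ is increasing, the defining inequalities of $\mathcal{S}_\mu$ in (\ref{1}) are equivalent to $\langle \mathbf{d}_i, \mathbf{w} \rangle > 0$ for each $i$, so the image of $\mathcal{S}_\mu$ under $\mathbf{x} \mapsto \mathbf{w}$ is the open cone $K = \{ \mathbf{w} : \langle \mathbf{d}_i, \mathbf{w} \rangle > 0, \ i = 1, \ldots, n-1 \}$. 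In the same coordinates the sought vector $\alpha$ is characterized by (\ref{34}), namely $\langle \mathbf{d}_i, \alpha \rangle \geq 0$ for all $i$, together with the sign pattern (\ref{33}).

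Next I would translate the hypothesis. If $\mathbf{p} \in \overline{\mathcal{S}}_\mu \cap L_I$, there is a sequence $\mathbf{x}^{(k)} \in \mathcal{S}_\mu$ with $\mathbf{x}^{(k)} \to \mathbf{p}$, so that $x_j^{(k)} \to 0$ for $j \in I$ and $x_j^{(k)} \to p_j > 0$ for $j \notin I$. Passing to logarithms yields a sequence $\mathbf{w}^{(k)} \in K$ with $w_j^{(k)} \to -\infty$ for \emph{every} $j \in I$ and $w_j^{(k)}$ convergent, hence bounded, for $j \notin I$. The crucial feature I will exploit is that all $I$-coordinates diverge to $-\infty$ simultaneously.

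Since $\alpha_j = 0$ is forced for $j \notin I$, I would regard $\alpha$ as a vector supported on the coordinates in $I$ and seek a solution of the linear system $\langle \mathbf{d}_i, \alpha \rangle \geq 0$ (for $i = 1, \ldots, n-1$) and $\alpha_j \leq -1$ (for $j \in I$); any such $\alpha$, extended by zeros, satisfies (\ref{33})--(\ref{34}). By Farkas' lemma this system is feasible unless there exist multipliers $\lambda_i \geq 0$ and $\nu_j \geq 0$, not all $\nu_j$ zero, such that $\sum_i \lambda_i \mathbf{d}_i$ agrees, on the coordinates of $I$, with the nonnegative vector $(\nu_j)_{j \in I}$. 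I would rule this alternative out directly: setting $\mathbf{g} = \sum_i \lambda_i \mathbf{d}_i$, the pairing $\langle \mathbf{g}, \mathbf{w}^{(k)} \rangle = \sum_i \lambda_i \langle \mathbf{d}_i, \mathbf{w}^{(k)} \rangle$ is nonnegative because $\mathbf{w}^{(k)} \in K$ and $\lambda_i \geq 0$; on the other hand $g_j \geq 0$ for all $j \in I$ with at least one $g_{j} > 0$, and the off-$I$ terms remain bounded, so $\langle \mathbf{g}, \mathbf{w}^{(k)} \rangle \to -\infty$ as the $I$-coordinates diverge. This contradiction establishes feasibility, and hence the existence of $\alpha$.

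The step I expect to be the main obstacle is obtaining the strict inequalities $\alpha_j < 0$ for every $j \in I$, rather than merely $\alpha_j \leq 0$ with a single strict component. A naive compactness argument---normalizing the $\mathbf{w}^{(k)}$ and extracting a limiting direction---yields only the weak sign condition, because distinct coordinates may tend to $-\infty$ at very different rates and the slower ones wash out under normalization. The Farkas formulation is precisely what circumvents this: the dual infeasibility argument uses only that some $I$-coordinate of $\mathbf{g}$ is positive while all $I$-coordinates of $\mathbf{w}^{(k)}$ diverge, a property insensitive to the rates, which forces the strict conclusion on all of $I$ at once.
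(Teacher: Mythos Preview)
Your argument is correct and is essentially the paper's own proof, carried out in logarithmic coordinates rather than multiplicatively: the paper also invokes Farkas' Lemma to produce the alternative vector $\mathbf{v}=\sum_i\lambda_i(\mathbf{z}_{\mu(i)}-\mathbf{z}_{\mu(i+1)})$ with $v_j\geq 0$ on $I$ and some $v_{j_0}>0$, then contradicts this by observing that $(\mathbf{x}^k/\mathbf{x}^*)^{\mathbf{v}}>1$ along a sequence in $\mathcal{S}_\mu$ yet $(\mathbf{x}/\mathbf{x}^*)^{\mathbf{v}}=0$ at the limit point in $L_I$---which is exactly your inequality $\langle\mathbf{g},\mathbf{w}^{(k)}\rangle\geq 0$ versus $\langle\mathbf{g},\mathbf{w}^{(k)}\rangle\to-\infty$ after taking logarithms. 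Your Farkas formulation with the inhomogeneous constraints $\alpha_j\leq -1$ is a slightly more explicit way of securing the strict sign on every coordinate of $I$ than the paper's terse ``Farkas' Lemma on the index set $I$,'' but the underlying mechanism is identical.
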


\begin{proof}

Suppose there is no $\alpha \in \mathbb{R}^m$ satisfying (\ref{33}) and (\ref{34}). By application of Farkas' Lemma on the index set $I$, this implies that there exist $\lambda_i \geq 0$, $i = 1, \ldots, n-1$ such that
\begin{equation}
\label{3890}
\mathbf{v} = \sum_{i=1}^{n-1} \lambda_i(\mathbf{z}_{\mu(i)} - \mathbf{z}_{\mu(i+1)})
\end{equation}
satisfies
\begin{equation}
\label{3891}
\begin{array}{ll} v_i \geq 0, \hspace{0.2in} & \mbox{for all } i \in I \\ v_{i_0} > 0, & \mbox{for at least one } i_0 \in I. \end{array}
\end{equation}


By assumption we have $\overline{\mathcal{S}}_\mu \cap L_I \not= \emptyset$. This implies that there exists a sequence $\left\{ \mathbf{x}^k \right\} \subset \mathcal{S}_\mu$ such that $\mathbf{x}^k \to \mathbf{x} \in L_I$ as $k \to \infty$. By consideration of the quantity $( \mathbf{x} / \mathbf{x}^* )^\mathbf{v}$ separately for $\mathbf{x} \in L_I$ and the sequence $\left\{ \mathbf{x}^k \right\}$ we will produce a contradiction.

Consider $\mathbf{x} \in L_I$. This implies $x_i = 0$ for $i \in I$. Since $v_i \geq 0$ for $i \in I$ and there exists at least one $i_0 \in I$ such that $v_{i_0} > 0$, it follows that
\begin{equation}
\label{9295}
\left( \frac{\mathbf{x}}{\; \mathbf{x}^*} \right)^{\mathbf{v}} = 0.
\end{equation}

Now consider the sequence $\left\{ \mathbf{x}^k \right\} \subset \mathcal{S}_\mu$ converging to $\mathbf{x}$. We have
\[\left( \frac{\mathbf{x}^k}{\; \mathbf{x}^*} \right)^{\mathbf{v}} = \left( \frac{\mathbf{x}^k}{\; \mathbf{x}^*} \right)^{\sum_{i=1}^{n-1} \lambda_i(\mathbf{z}_{\mu(i)} - \mathbf{z}_{\mu(i+1)})} = \prod_{i=1}^{n-1} \left[ \left( \frac{\mathbf{x}^k}{\; \mathbf{x}^*} \right)^{\mathbf{z}_{\mu(i)}-\mathbf{z}_{\mu(i+1)}}\right]^{\lambda_i}.\]
It follows from $\mathbf{x}^k \in \mathcal{S}_\mu$ and $\lambda_i \geq 0$ that, for $i = 1, \ldots, n-1$,
\[\left[ \left( \frac{\mathbf{x}^k}{\; \mathbf{x}^*} \right)^{\mathbf{z}_{\mu(i)}-\mathbf{z}_{\mu(i+1)}} \right]^{\lambda_i} > 1, \hspace{0.2in} \mbox{which implies} \hspace{0.2in} \left( \frac{\mathbf{x}^k}{\; \mathbf{x}^*} \right)^{\mathbf{v}} > 1.\]
It remains to take the limit $\mathbf{x}^k \to \mathbf{x}$. The function $( \mathbf{x} / \mathbf{x}^* )^{\mathbf{v}}$ is continuous on $\mathbb{R}^m_{>0}$; furthermore, it is continuous at any $\mathbf{x} \in \partial \mathbb{R}^m_{>0}$ such that $v_i \geq 0$ if $x_i = 0$. Since $\mathbf{v}$ satisfies this for $\mathbf{x}^k \to \mathbf{x} \in L_I$, we have
\begin{equation}
\label{9296}
\lim_{k \to \infty} \left( \frac{\mathbf{x}^k}{\; \mathbf{x}^*} \right)^\mathbf{v} = \left( \frac{\mathbf{x}}{\; \mathbf{x}^*} \right)^{\mathbf{v}} \geq 1.
\end{equation}

This contradicts (\ref{9295}). It follows that no $\mathbf{v}$ satisfying (\ref{3890}) and (\ref{3891}) exists. However, the existence of such a $\mathbf{v}$ was a direct consequence of the non-existence of an $\alpha$ satisfying (\ref{33}) and (\ref{34}), so it follows that such an $\alpha$ must exist. This proves our claim.

\end{proof}

\subsection{Cyclic Complex Balanced Systems}
\label{section4}

In this section, we consider the properties of cyclic complex balanced systems.

We start by introducing the concept of a reaction cycle as it is used in \cite{H-J1}.

\begin{definition}
\label{reactioncycle}
A family of complex indices $\left\{ \nu_0, \nu_1, \ldots, \nu_l \right\}$, $l \geq 2$, will be called a \textbf{cycle} if
\begin{equation}
\label{22}
\nu_0 = \nu_l
\end{equation}
\noindent but all other members of the family are distinct, and if
\[k(\nu_{j-1},\nu_{j}) > 0, \hspace{0.2in} j=1, 2, \ldots, l\]
\noindent where $l$ is the length of the cycle. The \textbf{reaction cycle} associated with $\left\{ \nu_0, \nu_1, \ldots, \nu_l \right\}$ is defined to be the corresponding set of elementary reactions
\begin{equation}
\label{23}
\mathcal{C}_{\nu_{j-1}} \; \longrightarrow \; \mathcal{C}_{\nu_j}, \hspace{0.2in} j=1, 2, \ldots, l.
\end{equation}
\end{definition}

\begin{definition}
\label{cyclicsystem}
We will say that a mass-action system is \textbf{cyclic} if the system consists only of a single reaction cycle. For such a system, it will be understood that $l=n$.
\end{definition}

In this section, we will consider only cyclic systems. For notational simplicity, we reindex our single cycle of consideration to $\left\{ 1, 2, \ldots, l, 1 \right\}$ so that the reaction cycle in consideration is
\[\mathcal{C}_{j-1} \; \longrightarrow \; \mathcal{C}_j, \hspace{0.2in} j=1, 2, \ldots, l\]
where $\mathcal{C}_{0} = \mathcal{C}_l$.

Reaction cycles are a central topic of consideration in \cite{H-J1}. The following result corresponds to equation (5-10) of that paper. (This should also be contrasted with Lemma 16 in \cite{C-D-S-S} for detailed balanced systems.)

\begin{lemma}
\label{lemmaq}
Consider a cyclic mass-action system. If the system is complex balanced then (\ref{de}) can be written
\begin{equation}
\label{equation}
\frac{d\mathbf{x}}{dt} = \kappa \sum_{i=1}^n \left( \mathbf{z}_{i+1} - \mathbf{z}_i \right) \left( \frac{\mathbf{x}}{\; \mathbf{x}^*} \right)^{\mathbf{z}_i}
\end{equation}
where $\mathbf{x}^*$ is the unique positive equilibrium point guaranteed by complex balancing and $\kappa>0$.
\end{lemma}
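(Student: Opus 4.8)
The plan is to specialize both the mass-action equation (\ref{de}) and the complex balancing condition (\ref{cb2}) to the cyclic structure, and then to observe that the quantity $k(i,i+1)(\mathbf{x}^*)^{\mathbf{z}_i}$ is forced to be constant around the cycle. Denoting this common value by $\kappa$ is exactly what turns (\ref{de}) into (\ref{equation}).

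First I would rewrite (\ref{de}) for the cyclic system. Since the only reactions present are $\mathcal{C}_i \to \mathcal{C}_{i+1}$ (under the reindexing of this section, with the wraparound convention $\mathcal{C}_0 = \mathcal{C}_n$ and $\mathbf{z}_{n+1} = \mathbf{z}_1$), the reaction set $\mathcal{R}$ consists precisely of the index pairs $(i,i+1)$, so the sum in (\ref{de}) collapses to
\[\frac{d\mathbf{x}}{dt} = \sum_{i=1}^n k(i,i+1)(\mathbf{z}_{i+1} - \mathbf{z}_i)\,\mathbf{x}^{\mathbf{z}_i}.\]

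Second, I would specialize the complex balancing condition. The crucial structural feature of a single cycle is that each complex $\mathcal{C}_i$ is the reactant of exactly one reaction (to $\mathcal{C}_{i+1}$) and the product of exactly one reaction (from $\mathcal{C}_{i-1}$). Hence on the right-hand side of (\ref{cb2}) only the term $j=i+1$ survives, while on the left-hand side only the term $j=i-1$ survives, reducing (\ref{cb2}) to
\[k(i-1,i)\,(\mathbf{x}^*)^{\mathbf{z}_{i-1}} = k(i,i+1)\,(\mathbf{x}^*)^{\mathbf{z}_i}, \qquad i = 1, \ldots, n.\]
Reading this as $\kappa_{i-1} = \kappa_i$ with $\kappa_i := k(i,i+1)(\mathbf{x}^*)^{\mathbf{z}_i}$, and chasing the equalities around the cycle, shows that all the $\kappa_i$ coincide with a single value $\kappa$. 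Positivity of the rate constants on the cycle together with positivity of the coordinates of $\mathbf{x}^*$ gives $\kappa > 0$.

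Finally, substituting $k(i,i+1) = \kappa/(\mathbf{x}^*)^{\mathbf{z}_i}$ into the collapsed rate equation and recognizing $\mathbf{x}^{\mathbf{z}_i}/(\mathbf{x}^*)^{\mathbf{z}_i} = (\mathbf{x}/\mathbf{x}^*)^{\mathbf{z}_i}$ yields the claimed expression (\ref{equation}). I do not anticipate a genuine obstacle here: the argument is essentially algebraic once the cyclic structure is exploited. The only points requiring care are the bookkeeping of the cyclic indexing (in particular the identification $\mathcal{C}_0 = \mathcal{C}_n$) and verifying that the double sums in (\ref{cb2}) really do reduce to single terms, which is precisely where the hypothesis that the system consists of a single reaction cycle is used.
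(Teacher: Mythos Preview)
Your proposal is correct and follows essentially the same approach as the paper: specialize (\ref{de}) to the cycle, reduce the complex balancing condition (\ref{cb2}) to $k(i-1,i)(\mathbf{x}^*)^{\mathbf{z}_{i-1}} = k(i,i+1)(\mathbf{x}^*)^{\mathbf{z}_i}$, conclude that $k(i,i+1)(\mathbf{x}^*)^{\mathbf{z}_i}$ is a common positive constant $\kappa$, and substitute back. Your explanation of why (\ref{cb2}) collapses to a single term on each side is in fact slightly more explicit than the paper's.
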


\begin{proof}
Since the system is cyclic with the cycle $\left\{ 1, 2, \ldots, n, 1 \right\}$, we can write (\ref{de}) as
\begin{equation}
\label{big2}
\frac{d\mathbf{x}}{dt} = \sum_{i=1}^n k(i,i+1) \: (\mathbf{z}_{i+1} - \mathbf{z}_i) \: \mathbf{x}^{\mathbf{z}_i}
\end{equation}
where $i=n+1$ implies $i=1$. By the assumption of complex balancing, from (\ref{cb2}) we have $k(i-1,i) (\mathbf{x}^*)^{\mathbf{z}_{i-1}} = k(i,i+1) (\mathbf{x}^*)^{\mathbf{z}_i}$ for $i=1, \ldots, n$. This can only be true for a cyclic system if
\begin{equation}
\label{big}
k(1,2) (\mathbf{x}^*)^{\mathbf{z}_1} = k(2,3) (\mathbf{x}^*)^{\mathbf{z}_2} = \cdots = k(n,1) (\mathbf{x}^*)^{\mathbf{z}^n} = \kappa > 0.
\end{equation}
Solving for each $k(i,i+1)$ individually, we have
\[k(i,i+1) = \frac{\kappa}{(\mathbf{x}^*)^{\mathbf{z}_i}}, \hspace{0.2in} \mbox{for } i = 1, \ldots, n\]
which upon substitution into (\ref{big2}) yields (\ref{equation}) and we are done.
\end{proof}

The following result allows us to rearrange the governing system of differential equations given by (\ref{equation}) into a form which will be convenient in light of our conception of strata. Since we are dealing with strata, we will need to recall the definition of a permutation operator (Definition \ref{permutation}).

It is important to notice the difference between $\mu(j+1)$ and $\mu(j)+1$: the increment $\mu(j+1)$ is made with respect to the implied ordering given by the permutation $\mu$, while the increment $\mu(j)+1$ is made with respect to the original ordering of the cycle.

\begin{theorem}
\label{result1}
Given a cyclic complex balanced system and an arbitrary permutation operator $\mu$, the system (\ref{de}) can be written
\begin{equation}
\label{9843}
\frac{d\mathbf{x}}{dt} = \kappa \sum_{i=1}^{n-1} \left[ \sum_{j=1}^i \mathbf{s}_{\mu(j)} \right] \left( \left( \frac{\mathbf{x}}{\; \mathbf{x}^*} \right)^{\mathbf{z}_{\mu(j)}} - \left( \frac{\mathbf{x}}{\; \mathbf{x}^*} \right)^{\mathbf{z}_{\mu(j+1)}} \right)
\end{equation}
where $\mathbf{s}_{\mu(j)} = \mathbf{z}_{\mu(j)+1} - \mathbf{z}_{\mu(j)}$.
\end{theorem}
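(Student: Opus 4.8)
The plan is to obtain (\ref{9843}) as a direct application of summation by parts (Abel's summation formula) to the complex balanced rate expression supplied by Lemma \ref{lemmaq}, once the terms have been reindexed according to the permutation $\mu$. The whole identity is really just a rearrangement of a finite sum, so the work lies in setting up the correct bookkeeping and exploiting one structural fact about cycles.

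First I would invoke Lemma \ref{lemmaq} to write the dynamics as $d\mathbf{x}/dt = \kappa \sum_{i=1}^n \mathbf{s}_i (\mathbf{x}/\mathbf{x}^*)^{\mathbf{z}_i}$, where $\mathbf{s}_i = \mathbf{z}_{i+1} - \mathbf{z}_i$ and all cyclic indices are read modulo $n$ (so $\mathbf{z}_{n+1} = \mathbf{z}_1$). Because $\mu$ is a bijection of $\{1,\ldots,n\}$ (Definition \ref{permutation}), the finite sum is invariant under reindexing by $\mu$, so I may equivalently write it as $\kappa \sum_{i=1}^n \mathbf{s}_{\mu(i)} (\mathbf{x}/\mathbf{x}^*)^{\mathbf{z}_{\mu(i)}}$. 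The key structural fact I would record next is that the stoichiometric increments sum to zero around the cycle: $\sum_{i=1}^n \mathbf{s}_i = \sum_{i=1}^n (\mathbf{z}_{i+1} - \mathbf{z}_i) = 0$ by telescoping. Since $\mu$ merely permutes these vectors, the total $\sum_{j=1}^n \mathbf{s}_{\mu(j)}$ equals the same sum and hence also vanishes.

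With the partial sums $S_i := \sum_{j=1}^i \mathbf{s}_{\mu(j)}$ and the scalar weights $w_i := (\mathbf{x}/\mathbf{x}^*)^{\mathbf{z}_{\mu(i)}}$, I would then apply summation by parts to get $\sum_{i=1}^n \mathbf{s}_{\mu(i)} w_i = \sum_{i=1}^{n-1} S_i (w_i - w_{i+1}) + S_n w_n$. The boundary term $S_n w_n$ is annihilated by the telescoping identity of the previous step, leaving precisely $\sum_{i=1}^{n-1} \left[\sum_{j=1}^i \mathbf{s}_{\mu(j)}\right](w_i - w_{i+1})$. Restoring the factor $\kappa$ and the definition of $w_i$ then yields the form displayed in (\ref{9843}).

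No step presents a genuine obstacle; the computation is elementary once it is recognized as Abel summation. The only point demanding care is the vanishing of the boundary term $S_n w_n$, and this is exactly where the cyclic hypothesis enters essentially: it is the closure of the cycle ($\mathbf{z}_{n+1} = \mathbf{z}_1$) that forces $\sum_i \mathbf{s}_i = 0$ and thereby collapses the full sum onto the single series over $i = 1, \ldots, n-1$ appearing in (\ref{9843}).
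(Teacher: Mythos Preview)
Your proposal is correct and is essentially the same argument as the paper's. The paper carries out Abel summation by hand---subtracting the vanishing sum $\sum_i \mathbf{s}_{\mu(i)} w_n$, telescoping each $w_i - w_n$, and then interchanging the order of summation---whereas you invoke the summation-by-parts identity directly; the key ingredient in both is the telescoping fact $\sum_i \mathbf{s}_{\mu(i)} = \mathbf{0}$ coming from the cycle closure.
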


\begin{proof}
We notice first of all that, since the system is cyclic, we have
\[\sum_{i=1}^n \mathbf{s}_{\mu(i)} = \sum_{i=1}^n \mathbf{s}_{i} = \sum_{i=1}^n (\mathbf{z}_{i+1}-\mathbf{z}_i) = \mathbf{0}\]
which immediately implies $\displaystyle{\kappa \sum_{i=1}^n \mathbf{s}_{\mu(i)} \left( \frac{\mathbf{x}}{\; \mathbf{x}^*} \right)^{\mathbf{z}_{\mu(n)}} = \mathbf{0}.}$

Subtracting this from (\ref{equation}), which is the form of (\ref{de}) justified by Lemma \ref{lemmaq}, we have
\[\begin{split} \frac{d\mathbf{x}}{dt} & = \kappa \sum_{i=1}^{n-1} \mathbf{s}_{\mu(i)} \left[ \left( \frac{\mathbf{x}}{\; \mathbf{x}^*} \right)^{\mathbf{z}_{\mu(i)}} - \left( \frac{\mathbf{x}}{\; \mathbf{x}^*} \right)^{\mathbf{z}_{\mu(n)}} \right] \\ & = \kappa \sum_{i=1}^{n-1} \mathbf{s}_{\mu(i)} \left[ \sum_{j=i}^{n-1} \left( \left( \frac{\mathbf{x}}{\; \mathbf{x}^*} \right)^{\mathbf{z}_{\mu(j)}} - \left( \frac{\mathbf{x}}{\; \mathbf{x}^*} \right)^{\mathbf{z}_{\mu(j+1)}}\right) \right] \\ & = \kappa \sum_{i=1}^{n-1} \left[ \sum_{j=1}^i \mathbf{s}_{\mu(j)} \right] \left( \left( \frac{\mathbf{x}}{\; \mathbf{x}^*} \right)^{\mathbf{z}_{\mu(i)}} - \left( \frac{\mathbf{x}}{\; \mathbf{x}^*} \right)^{\mathbf{z}_{\mu(i+1)}}\right) \end{split}\]
and the result is shown.
\end{proof}

It is clear from (\ref{9843}) that the vectors $\sum_{j=1}^i \mathbf{s}_{\mu(j)}$, $i = 1, \ldots, n-1,$ play an intricate role in determining the dynamics of a system within a given stratum. The following result allows us to further understand the nature of these vectors.

\begin{lemma}
\label{slemma}
For every permutation operator $\mu$ and every $k = 1, 2, \ldots, n-1$, there exist $\lambda_j \in \mathbb{Z}_{\leq 0}$, $j = 1, 2, \ldots, n-1,$ such that
\[\sum_{j=1}^k \mathbf{s}_{\mu(j)} = \sum_{j=1}^{n-1} \lambda_j \left( \mathbf{z}_{\mu(j)} - \mathbf{z}_{\mu(j+1)} \right) \]
where $\mathbf{s}_{\mu(j)} = \mathbf{z}_{\mu(j)+1} - \mathbf{z}_{\mu(j)}$.
\end{lemma}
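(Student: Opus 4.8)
The plan is to reduce the claim to a purely combinatorial statement about how the cyclic ordering of the complexes sits inside the ordering induced by $\mu$, and then to track signs carefully through a single telescoping identity. The target representation expresses $\sum_{j=1}^k \mathbf{s}_{\mu(j)}$ as a combination of the consecutive $\mu$-differences $\mathbf{z}_{\mu(l)} - \mathbf{z}_{\mu(l+1)}$, so the whole content is to show that these differences can be assembled with \emph{non-positive} integer coefficients.

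First I would unwind the left-hand side. Writing $A = \{\mu(1), \ldots, \mu(k)\}$ for the set of the first $k$ complexes in the $\mu$-ordering and recalling $\mathbf{s}_{\mu(j)} = \mathbf{z}_{\mu(j)+1} - \mathbf{z}_{\mu(j)}$, I have
\[
\sum_{j=1}^k \mathbf{s}_{\mu(j)} = \sum_{i \in A}(\mathbf{z}_{i+1} - \mathbf{z}_i) = \sum_{i \in A}\mathbf{z}_{i+1} - \sum_{i \in A}\mathbf{z}_i.
\]
Reindexing the first sum by $i \mapsto i+1$ (indices read cyclically mod $n$, since the system is cyclic) turns it into $\sum_{i \in A+1}\mathbf{z}_i$, so after cancelling the common block $A \cap (A+1)$,
\[
\sum_{j=1}^k \mathbf{s}_{\mu(j)} = \sum_{i \in (A+1)\setminus A}\mathbf{z}_i - \sum_{i \in A \setminus (A+1)}\mathbf{z}_i.
\]
Since $i \mapsto i+1 \pmod n$ is a bijection, $|A+1| = |A|$, so $(A+1)\setminus A$ and $A\setminus(A+1)$ have equal cardinality. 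I would then fix \emph{any} bijection $\phi$ between them and write the difference of sums as $\sum_q (\mathbf{z}_{\phi(q)} - \mathbf{z}_q)$, with $q$ ranging over $A\setminus(A+1)$; note $q \in A$ while $\phi(q) \notin A$.

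The point where the sign works out is the following. Membership in $A$ is exactly ``having $\mu$-position at most $k$'': writing $c_i = \mu^{-1}(i)$ for the position of complex $i$ in the $\mu$-ordering, we have $c_q \le k < c_{\phi(q)}$ for every $q$. Hence $\phi(q)$ lies strictly after $q$ in the $\mu$-ordering, and telescoping gives
\[
\mathbf{z}_{\phi(q)} - \mathbf{z}_q = -\sum_{l = c_q}^{c_{\phi(q)}-1}\bigl(\mathbf{z}_{\mu(l)} - \mathbf{z}_{\mu(l+1)}\bigr),
\]
with every coefficient equal to $-1$. Summing over $q$ and collecting terms, each $\mathbf{z}_{\mu(l)} - \mathbf{z}_{\mu(l+1)}$ receives a coefficient $\lambda_l$ that is a sum of $-1$'s and $0$'s, hence a non-positive integer, which is exactly the asserted representation.

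The step I expect to require the most care is the sign bookkeeping in the last paragraph: the entire result hinges on the observation that every index removed from $A$ has $\mu$-position $\le k$, while every index entering through $A+1$ has $\mu$-position $> k$, forcing $c_q < c_{\phi(q)}$ and hence the $-1$ (rather than $+1$) coefficients. Because this inequality holds for \emph{any} bijection $\phi$, no clever pairing is needed; but one must be careful that the reindexing $i \mapsto i+1$ is read cyclically, and that the hypothesis $k \le n-1$ guarantees $A$ is a proper subset, so that $(A+1)\setminus A \neq \emptyset$ and the telescoping ranges are well-defined.
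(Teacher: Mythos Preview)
Your argument is correct. Both your proof and the paper's rest on the same underlying observation---that each ``incoming'' index $\phi(q)$ lies strictly later in the $\mu$-ordering than the corresponding ``outgoing'' index $q$, so the telescoping difference $\mathbf{z}_{\phi(q)}-\mathbf{z}_q$ picks up only $-1$ coefficients against the consecutive $\mu$-differences---but the packaging differs. The paper proceeds algorithmically: it starts at $\mu(k)$ and walks forward along the cycle $\mu(k),\mu(k)+1,\mu(k)+2,\ldots$ as long as the indices remain in $A=\{\mu(1),\ldots,\mu(k)\}$, stops at the first exit point, writes the accumulated block as a single difference $\mathbf{z}_{\mu(t_{i_0})}-\mathbf{z}_{\mu(k)}$, and then repeats on the remaining indices. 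This implicitly constructs the natural bijection that matches each maximal cyclic run of $A$ to the element immediately after it. Your argument bypasses the iteration entirely: by rewriting the sum as $\sum_{(A+1)\setminus A}\mathbf{z}_i-\sum_{A\setminus(A+1)}\mathbf{z}_i$ and observing that \emph{any} bijection $\phi$ between these equal-sized sets works (since every $q$ has $\mu$-rank $\le k$ and every $\phi(q)$ has $\mu$-rank $>k$), you get the sign conclusion in one stroke. Your route is shorter and makes the combinatorial content more transparent; the paper's route has the mild advantage of exhibiting a canonical pairing tied to the cycle structure, which is suggestive for the later decomposition in Lemma~\ref{yo}, but is not needed for the lemma itself.
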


\begin{proof}

Consider a permutation operator $\mu$ and fix a $k \in \left\{ 1, 2, \ldots, n-1 \right\}$. Consider
\[\mathbf{s}_{\mu(k)} = \mathbf{z}_{\mu(k)+1} - \mathbf{z}_{\mu(k)}.\]
Clearly, there exists a $t_1 \in \left\{ 1, 2, \ldots, n \right\}$ such that $\mu(k)+1=\mu(t_1)$. We need to consider where $\mu(t_1)$ lies in the ordering implied by $\mu$ relative to $\mu(k)$, in particular, whether (1) $\mu(t_1) \succ \mu(k)$, or (2) $\mu(t_1) \prec \mu(k)$. We will use an iterative process on the vectors $\mathbf{s}_{\mu(j)}$, $j=1, \ldots, k,$ to show that the case $\mu(t_1) \succ \mu(k)$ eventually leads us in a natural way to consideration of an index $t_{i_0}$ satisfying $\mu(t_{i_0}) \prec \mu(k)$.

\textbf{Case 1:} If $\mu(t_1) \succ \mu(k)$ then $\mathbf{s}_{\mu(t_1)}$ is a term in the sum $\sum_{j=1}^k \mathbf{s}_{\mu(j)}$. It follows that
\begin{equation}
\label{39}
\begin{split}\mathbf{s}_{\mu(k)}+\mathbf{s}_{\mu(t_1)} & = (\mathbf{z}_{\mu(k)+1}-\mathbf{z}_{\mu(k)})+(\mathbf{z}_{\mu(t_1)+1}-\mathbf{z}_{\mu(t_1)}) \\ & = \mathbf{z}_{\mu(t_1)+1}-\mathbf{z}_{\mu(k)}\end{split}
\end{equation}
since $\mu(k) + 1 = \mu(t_1)$. We now repeat this process. We know that there exists a $t_2 \in \left\{ 1, 2, \ldots, n \right\}$ such that $\mu(t_1)+1 = \mu(t_2)$ and, as before, either $\mu(t_2) \succ \mu(k)$ or $\mu(t_2) \prec \mu(k)$. If $\mu(t_2) \succ \mu(k)$, we add $\mathbf{s}_{\mu(t_2)}$ to the cumulative sum (\ref{39}). We can continue doing this until we arrive at an index $i_0$ for which $\mu(t_{i_0-1})+1=\mu(t_{i_0}) \prec \mu(k)$, yielding
\begin{equation}
\label{crap}
\mathbf{s}_{\mu(k)} + \sum_{i=1}^{i_0-1} \mathbf{s}_{\mu(t_i)} = \mathbf{z}_{\mu(t_{i_0})} - \mathbf{z}_{\mu(k)}.
\end{equation}
We know such a terminal index exists because the cyclic nature of the system guarantees each index $\mu(t_i-1)+1=\mu(t_i) \succ \mu(k)$ is unique, so that a distinct vector $\mathbf{s}_{\mu(t_i)}$ is chosen during each iteration. Since $k < n$ and the cycle is of length $n$, this process must reach an index $\mu(t_{i_0}-1)+1 = \mu(t_{i_0}) \prec \mu(k)$ eventually.

\textbf{Case 2:} If $\mu(t_{i_0}) \prec \mu(k)$, we can interpolate (\ref{crap}) as follows:
\begin{equation}
\label{555}
\begin{split} & \mathbf{s}_{\mu(k)} + \sum_{i=1}^{i_0-1} \mathbf{s}_{\mu(t_i)} = \mathbf{z}_{\mu(t_{i_0})} - \mathbf{z}_{\mu(k)}\\ & \hspace{0.4in} = (\mathbf{z}_{\mu(t_{i_0})} - \mathbf{z}_{\mu(t_{i_0}-1)}) + \cdots + ( \mathbf{z}_{\mu(k+1)} - \mathbf{z}_{\mu(k)}) \\ & \hspace{0.4in} = - (\mathbf{z}_{\mu(k)}-\mathbf{z}_{\mu(k+1)}) - \cdots - (\mathbf{z}_{\mu(t_{i_0}-1)} - \mathbf{z}_{\mu(t_{i_0})}).\end{split}
\end{equation}
Notice that if our initial reindexing $\mu(k)+1 = \mu(t_1)$ yielded $\mu(t_1) \prec  \mu(k)$, we can take $t_{i_0}=t_1$ in the above argument. This amounts to interpolating $\mathbf{s}_{\mu(k)} = \mathbf{z}_{\mu(t_1)}-\mathbf{z}_{\mu(k)}$ directly.

We return now to consideration of the entire sum $\sum_{j=1}^k \mathbf{s}_{\mu(j)}$. Since a distinct vector $\mathbf{s}_{\mu(t_i)}$ is chosen in each application of the argument for Case 1, we can divide this sum into those elements $\mathbf{s}_{\mu(j)}$ considered in (\ref{555}) and those not. For those elements not yet considered, the same argument can be applied starting with the lowest remaining index, which will yield another sum of the form (\ref{555}). This will remove some of the remaining vectors $\mathbf{s}_{\mu(j)}$ from the sum. Since there are a finite number of complexes, this process must terminate at some point. Clearly, any sum of vectors of the form given in (\ref{555}) has non-positive integer coefficients for the terms $\mathbf{z}_{\mu(j)} - \mathbf{z}_{\mu(j+1)}$, so that the existence of $\lambda_j \in \mathbb{Z}_{\leq 0}$ is guaranteed. Since $\mu$ and $k \in \left\{ 1, 2, \ldots, n-1 \right\}$ were chosen arbitrarily, the result follows.

\end{proof}

The results to this point are sufficient to prove the following result. This should be contrasted with Corollary 18 of \cite{C-D-S-S}.

\begin{lemma}
\label{cycliclemma}
Consider a cyclic complex balanced system and an arbitrary permutation operator $\mu$. If $\overline{\mathcal{S}}_\mu \cap L_I \not= \emptyset$ then there exists an $\alpha \in \mathbb{R}_{\leq 0}^m$ satisfying
\[\left.  \begin{array}{l} \alpha_i < 0, \mbox{ for } i \in I \\ \alpha_i = 0, \mbox{ for } i \not\in I \end{array} \right.\]
such that $\langle \alpha, \mathbf{f}(\mathbf{x}) \rangle \leq 0$ for every $\mathbf{x} \in \overline{\mathcal{S}}_\mu$.
\end{lemma}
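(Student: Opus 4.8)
The plan is to combine the three preceding results---Lemma \ref{alpha}, Theorem \ref{result1}, and Lemma \ref{slemma}---into a single sign computation for $\langle \alpha, \mathbf{f}(\mathbf{x}) \rangle$. First I would invoke Lemma \ref{alpha}: since $\overline{\mathcal{S}}_\mu \cap L_I \not= \emptyset$, there exists $\alpha \in \mathbb{R}^m$ with $\alpha_i < 0$ for $i \in I$, $\alpha_i = 0$ for $i \not\in I$, and $\langle \mathbf{z}_{\mu(i)} - \mathbf{z}_{\mu(i+1)}, \alpha \rangle \geq 0$ for $i = 1, \ldots, n-1$. This $\alpha$ already meets the sign requirements of the statement (in particular $\alpha \in \mathbb{R}_{\leq 0}^m$), so it only remains to verify $\langle \alpha, \mathbf{f}(\mathbf{x}) \rangle \leq 0$ on $\overline{\mathcal{S}}_\mu$.

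Next I would substitute the cyclic representation of $\mathbf{f}(\mathbf{x})$ from Theorem \ref{result1} and pair it with $\alpha$, obtaining
\[\langle \alpha, \mathbf{f}(\mathbf{x}) \rangle = \kappa \sum_{i=1}^{n-1} \left\langle \alpha, \sum_{j=1}^i \mathbf{s}_{\mu(j)} \right\rangle \left( \left( \frac{\mathbf{x}}{\; \mathbf{x}^*} \right)^{\mathbf{z}_{\mu(i)}} - \left( \frac{\mathbf{x}}{\; \mathbf{x}^*} \right)^{\mathbf{z}_{\mu(i+1)}} \right).\]
By the definition of $\mathcal{S}_\mu$ each difference factor is strictly positive on $\mathcal{S}_\mu$, and by continuity it is $\geq 0$ on the closure $\overline{\mathcal{S}}_\mu$. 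Since $\kappa > 0$, it therefore suffices to show that every coefficient $\langle \alpha, \sum_{j=1}^i \mathbf{s}_{\mu(j)} \rangle$ is non-positive.

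The key step is to control the sign of those coefficients using Lemma \ref{slemma}. For each fixed $i$, that lemma produces integers $\lambda_j \leq 0$ with $\sum_{j=1}^i \mathbf{s}_{\mu(j)} = \sum_{j=1}^{n-1} \lambda_j (\mathbf{z}_{\mu(j)} - \mathbf{z}_{\mu(j+1)})$. Pairing against $\alpha$ and using linearity of the inner product gives $\langle \alpha, \sum_{j=1}^i \mathbf{s}_{\mu(j)} \rangle = \sum_{j=1}^{n-1} \lambda_j \langle \mathbf{z}_{\mu(j)} - \mathbf{z}_{\mu(j+1)}, \alpha \rangle$. Here each $\lambda_j \leq 0$ while each $\langle \mathbf{z}_{\mu(j)} - \mathbf{z}_{\mu(j+1)}, \alpha \rangle \geq 0$ by (\ref{34}), so every summand is non-positive and the coefficient is $\leq 0$. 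Feeding this back into the displayed formula, $\langle \alpha, \mathbf{f}(\mathbf{x}) \rangle$ is a sum of non-positive coefficients times non-negative difference factors times $\kappa > 0$, hence $\leq 0$ throughout $\overline{\mathcal{S}}_\mu$.

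I expect the only genuinely delicate point to be the sign bookkeeping: the conclusion hinges on the precise matching of the non-positivity of the $\lambda_j$ from Lemma \ref{slemma} against the non-negativity of $\langle \mathbf{z}_{\mu(j)} - \mathbf{z}_{\mu(j+1)}, \alpha \rangle$ from Lemma \ref{alpha}, together with the partial-sum structure of Theorem \ref{result1} which arranges the difference factors to be non-negative on $\overline{\mathcal{S}}_\mu$. A secondary care is needed in passing from the open stratum to its closure, where the strict inequalities in the difference factors degrade to non-strict ones; this is handled by continuity and does not affect the argument.
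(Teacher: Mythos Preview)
Your proposal is correct and follows essentially the same approach as the paper: invoke Lemma \ref{alpha} to obtain $\alpha$, rewrite $\langle \alpha, \mathbf{f}(\mathbf{x}) \rangle$ via Theorem \ref{result1}, observe the difference factors are non-negative on $\overline{\mathcal{S}}_\mu$, and use Lemma \ref{slemma} together with (\ref{34}) to force each coefficient $\langle \alpha, \sum_{j=1}^i \mathbf{s}_{\mu(j)} \rangle$ to be non-positive. The sign bookkeeping and the passage to the closure are handled exactly as in the paper.
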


\begin{proof}
Since $\overline{\mathcal{S}}_\mu \cap L_I \not= \emptyset$, we know by Lemma \ref{alpha} that there exists an $\alpha \in \mathbb{R}_{\leq 0}^m$ satisfying
\[\left.  \begin{array}{l} \alpha_i < 0, \mbox{ for } i \in I \\ \alpha_i = 0, \mbox{ for } i \not\in I \end{array} \right.\]
such that
\[\big\langle \mathbf{z}_{\mu(i)} - \mathbf{z}_{\mu(i+1)}, \alpha \big\rangle \geq 0, \hspace{0.2in} \mbox{for } i = 1, \ldots, n-1.\]

According to Theorem \ref{result1} we have
\begin{equation}
\label{awesome}
\begin{split} \langle \alpha, \mathbf{f}(\mathbf{x}) \rangle & = \Bigg\langle \alpha, \kappa \sum_{i=1}^{n-1} \left[ \sum_{j=1}^i \mathbf{s}_{\mu(j)} \right] \left( \left( \frac{\mathbf{x}}{\; \mathbf{x}^*} \right)^{\mathbf{z}_{\mu(j)}} - \left( \frac{\mathbf{x}}{\; \mathbf{x}^*} \right)^{\mathbf{z}_{\mu(j+1)}} \right) \Bigg\rangle \\ & = \kappa \sum_{i=1}^{n-1} \left( \left( \frac{\mathbf{x}}{\; \mathbf{x}^*} \right)^{\mathbf{z}_{\mu(j)}} - \left( \frac{\mathbf{x}}{\; \mathbf{x}^*} \right)^{\mathbf{z}_{\mu(j+1)}} \right) \cdot \Bigg\langle \alpha, \sum_{j=1}^i \mathbf{s}_{\mu(j)} \Bigg\rangle. \end{split}
\end{equation}
For every $\mathbf{x} \in \overline{\mathcal{S}}_\mu$, by Definition \ref{strata} we have
\begin{equation}
\label{stillawesome}
\left( \left( \frac{\mathbf{x}}{\; \mathbf{x}^*} \right)^{\mathbf{z}_{\mu(j)}} - \left( \frac{\mathbf{x}}{\; \mathbf{x}^*} \right)^{\mathbf{z}_{\mu(j+1)}} \right) \geq 0.
\end{equation}

Now consider $\langle \alpha, \sum_{j=1}^i \mathbf{s}_{\mu(j)} \rangle$. We know from Lemma \ref{slemma} that there exist $\lambda_j \in \mathbb{Z}_{\leq 0}$, $j=1, 2, \ldots, n-1$, such that
\[\sum_{j=1}^i \mathbf{s}_{\mu(j)} = \sum_{j=1}^{n-1} \lambda_j (\mathbf{z}_{\mu(j)}-\mathbf{z}_{\mu(j+1)}).\]
We also know by Lemma \ref{alpha} that $\big\langle \mathbf{z}_{\mu(j)} - \mathbf{z}_{\mu(j+1)}, \alpha \big\rangle \geq 0$. Together, these facts imply that for every $i=1, 2, \ldots, n-1,$
\begin{equation}
\label{alsoawesome}
\Bigg\langle \alpha, \sum_{j=1}^i \mathbf{s}_{\mu(j)} \Bigg\rangle = \sum_{j=1}^{n-1} \lambda_j \big\langle \mathbf{z}_{\mu(j)}-\mathbf{z}_{\mu(j+1)}, \alpha \big\rangle \leq 0.
\end{equation}

It follows immediately from (\ref{awesome}), (\ref{stillawesome}), (\ref{alsoawesome}) and the fact that $\kappa>0$ that $\langle \alpha, \mathbf{f}(\mathbf{x}) \rangle \leq 0$ for every $\mathbf{x} \in \overline{\mathcal{S}}_\mu$, and we are done.

\end{proof}

\subsection{General Complex Balanced Systems}
\label{section5}

In this section, we extend Lemma \ref{cycliclemma} to general complex balanced systems. We follow the methodology employed by Horn \emph{et al.} in generalizing from cyclic complex balanced systems to general complex balanced systems \cite{H-J1}.

The following result extends Lemma \ref{lemmaq} to general complex balanced systems.

\begin{lemma}
\label{yo}
Consider a mass-action system which is complex balanced at $\mathbf{x}^* \in \mathbb{R}_{>0}^m$. Then there exists a $\delta \in \mathbb{Z}_{>0}$ and $\kappa_i > 0$, $i = 1, 2, \ldots, \delta,$ such that
\begin{equation}
\label{yoyoyo}
\frac{d\mathbf{x}}{dt} = \kappa_1 \mathbf{X}_1 + \kappa_2 \mathbf{X}_2 + \cdots + \kappa_\delta \mathbf{X}_\delta
\end{equation}
where
\begin{equation}
\label{equation2}
\mathbf{X}_i = \sum_{j=1}^{l_i} \left(\mathbf{z}_{\nu_{j+1}^{(i)}} - \mathbf{z}_{\nu_{j}^{(i)}}\right) \left( \frac{\mathbf{x}}{\; \mathbf{x}^*} \right)^{\mathbf{z}_{\nu_{j}^{(i)}}}
\end{equation}
where the set $\left\{ \nu_{1}^{(i)}, \nu_{2}^{(i)}, \ldots, \nu_{l_i}^{(i)}, \nu_{l_i+1}^{(i)} \right\}$ is a cycle according to Definition \ref{reactioncycle}.
\end{lemma}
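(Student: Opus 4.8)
The plan is to recast the right-hand side of (\ref{de}) in terms of the scaled weights $K_{ij} = k(i,j)(\mathbf{x}^*)^{\mathbf{z}_i}$ and then appeal to a classical cycle-decomposition theorem for nonnegative circulations on the directed graph of complexes. First I would observe that, since $k(i,j)\,\mathbf{x}^{\mathbf{z}_i} = K_{ij}(\mathbf{x}/\mathbf{x}^*)^{\mathbf{z}_i}$, equation (\ref{de}) becomes
\begin{equation}
\frac{d\mathbf{x}}{dt} = \sum_{(i,j) \in \mathcal{R}} K_{ij}\,(\mathbf{z}_j - \mathbf{z}_i)\left(\frac{\mathbf{x}}{\;\mathbf{x}^*}\right)^{\mathbf{z}_i},
\end{equation}
with $K_{ij} > 0$ precisely when $(i,j) \in \mathcal{R}$. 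The crucial reinterpretation of complex balancing (\ref{cb2}) is that, written in terms of these weights, it reads $\sum_j K_{ji} = \sum_j K_{ij}$ for every complex $i$; that is, the $K_{ij}$ form a balanced flow (a circulation) on the reaction graph, with total weight entering each node equal to the total weight leaving it.

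Second, I would invoke the standard result that a nonnegative circulation on a finite directed graph decomposes as a nonnegative combination of indicator functions of simple directed cycles, proving it by the familiar peeling argument tailored to Definition \ref{reactioncycle}. As long as some current weight $K_{ij} > 0$, one starts at the reactant complex $i$ and repeatedly follows an outgoing edge of positive weight; node conservation $\sum_j K_{ji} = \sum_j K_{ij}$ guarantees that any node reached along a positive edge has positive in-sum, hence positive out-sum, so the walk never terminates, and finiteness of $\mathcal{C}$ forces a repeated complex, exposing a directed cycle $\{\nu_1, \ldots, \nu_l, \nu_{l+1} = \nu_1\}$. Setting $\kappa$ equal to the minimum of the current weights along that cycle and subtracting $\kappa$ from each such edge preserves the balance condition—it removes $\kappa$ from both the in-sum and the out-sum of every node on the cycle—while driving at least one positive weight to zero. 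Since the number of positive weights strictly decreases at each stage, the process halts after finitely many, say $\delta$, steps, yielding $K_{ij} = \sum_{p=1}^{\delta} \kappa_p\, \chi_p(i,j)$ with $\kappa_p > 0$ and $\chi_p$ the indicator of the $p$th extracted cycle $\{\nu_1^{(p)}, \ldots, \nu_{l_p}^{(p)}, \nu_{l_p+1}^{(p)}\}$. Because the diagonal rates vanish ($k(i,i)=0$), every extracted cycle has length $l_p \geq 2$, exactly as Definition \ref{reactioncycle} requires.

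Finally, substituting this decomposition into the rescaled differential equation and interchanging the order of summation gives
\begin{equation}
\frac{d\mathbf{x}}{dt} = \sum_{p=1}^{\delta} \kappa_p \sum_{j=1}^{l_p} \left(\mathbf{z}_{\nu_{j+1}^{(p)}} - \mathbf{z}_{\nu_{j}^{(p)}}\right)\left(\frac{\mathbf{x}}{\;\mathbf{x}^*}\right)^{\mathbf{z}_{\nu_{j}^{(p)}}} = \sum_{p=1}^{\delta} \kappa_p\, \mathbf{X}_p,
\end{equation}
which is exactly (\ref{yoyoyo}) with each $\mathbf{X}_p$ of the form (\ref{equation2}). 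I expect the main obstacle to be the termination and well-definedness of the cycle-extraction procedure—specifically, verifying carefully that subtracting a cycle leaves the reduced weights both nonnegative and still balanced, so that the argument iterates, and that each step annihilates at least one edge so the recursion stops. This is precisely where the reinterpretation of (\ref{cb2}) as node conservation does the real work, mirroring the passage from the single-cycle Lemma \ref{lemmaq} to the general statement in \cite{H-J1}.
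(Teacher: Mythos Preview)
Your proposal is correct and is essentially the same argument as the paper's: both rest on decomposing the complex-balanced system into cyclic subsystems complex balanced at $\mathbf{x}^*$ and then applying the single-cycle rewriting (Lemma \ref{lemmaq}). The only difference is that the paper invokes this decomposition as a black box (Lemma 6D of \cite{H-J1}), whereas you reprove it explicitly via the circulation interpretation $K_{ij}=k(i,j)(\mathbf{x}^*)^{\mathbf{z}_i}$ and the standard peeling argument---a self-contained presentation, but not a genuinely different route.
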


\begin{proof}
According to Lemma 6D of \cite{H-J1}, if a system is complex balanced at $\mathbf{x}^* \in \mathbb{R}_{>0}^m$ then it can be decomposed into a finite number of cyclic subsystems which are all complex balanced at $\mathbf{x}^*$. This decomposition occurs with respect to the rate constants $k(i,j)$, which enter (\ref{de}) linearly. This implies (\ref{de}) can be written as
\[\frac{d\mathbf{x}}{dt} = \mathbf{Y}_1 + \mathbf{Y}_2 + \cdots + \mathbf{Y}_\delta\]
for $\delta \in \mathbb{N}_+$, where each $\mathbf{Y}_i$, $i=1, 2, \ldots, \delta,$ corresponds to a cyclic mass-action system which is complex balanced at $\mathbf{x}^*$. Applying Lemma \ref{lemmaq} to each of these terms yields (\ref{yoyoyo}) with the difference that we must use the ordering of each individual cycle in transforming (\ref{equation}) to (\ref{equation2}).
\end{proof}

We are now prepared to generalize Lemma \ref{cycliclemma} to general complex balanced systems.

\begin{theorem}
\label{maintheorem}
Consider a complex balanced system and an arbitrary permutation operator $\mu$. If $\overline{\mathcal{S}}_\mu \cap L_I \not= \emptyset$ then there exists an $\alpha \in \mathbb{R}_{\leq 0}^m$ satisying
\[\left.  \begin{array}{l} \alpha_i < 0, \mbox{ for } i \in I \\ \alpha_i = 0, \mbox{ for } i \not\in I \end{array} \right.\]
such that $\langle \alpha, \mathbf{f}(\mathbf{x}) \rangle \leq 0$ for every $\mathbf{x} \in \overline{\mathcal{S}}_\mu$.
\end{theorem}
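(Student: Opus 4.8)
The plan is to reduce the general case to the cyclic case already settled in Lemma \ref{cycliclemma} by exploiting the cycle decomposition of Lemma \ref{yo} together with the linearity of $\langle \alpha, \cdot \rangle$. First I would invoke Lemma \ref{alpha}: since $\overline{\mathcal{S}}_\mu \cap L_I \neq \emptyset$, there is a single $\alpha \in \mathbb{R}^m_{\leq 0}$ with $\alpha_i < 0$ for $i \in I$, $\alpha_i = 0$ for $i \notin I$, and $\langle \mathbf{z}_{\mu(i)} - \mathbf{z}_{\mu(i+1)}, \alpha \rangle \geq 0$ for $i = 1, \ldots, n-1$. The decisive observation is that this $\alpha$ depends only on $\mu$ and $I$, not on any individual cycle, so the same $\alpha$ may be tested against every cyclic summand simultaneously. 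I would then write $\mathbf{f}(\mathbf{x}) = \sum_{i=1}^\delta \kappa_i \mathbf{X}_i$ as in (\ref{yoyoyo}), where each $\mathbf{X}_i$ is a cyclic complex balanced system at $\mathbf{x}^*$ and $\kappa_i > 0$, so that it suffices to establish $\langle \alpha, \mathbf{X}_i \rangle \leq 0$ on $\overline{\mathcal{S}}_\mu$ for each $i$.

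For a fixed cycle, say $\{\nu_1^{(i)}, \ldots, \nu_{l_i}^{(i)}\}$, I would restrict the global ordering $\succ$ implied by $\mu$ to the $l_i$ complexes appearing in that cycle, obtaining an induced permutation $\mu_i$ of $\{1, \ldots, l_i\}$. The cyclic machinery of Theorem \ref{result1} and Lemma \ref{slemma} then applies to $\mathbf{X}_i$ with $\mu_i$ in place of $\mu$, rewriting $\mathbf{X}_i$ in the telescoped form (\ref{9843}) and expressing each partial sum $\sum_{j=1}^k \mathbf{s}_{\mu_i(j)}$ as $\sum_j \lambda_j (\mathbf{z}_{\mu_i(j)} - \mathbf{z}_{\mu_i(j+1)})$ with $\lambda_j \in \mathbb{Z}_{\leq 0}$. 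The step requiring care is verifying that the two sign hypotheses survive restriction. Both do so by transitivity: summing the consecutive relations $\langle \mathbf{z}_{\mu(a)} - \mathbf{z}_{\mu(a+1)}, \alpha \rangle \geq 0$ telescopes to $\langle \mathbf{z}_{\mu(a)} - \mathbf{z}_{\mu(b)}, \alpha \rangle \geq 0$ whenever $a \leq b$, and likewise $\mathbf{x} \in \overline{\mathcal{S}}_\mu$ forces $(\mathbf{x}/\mathbf{x}^*)^{\mathbf{z}_{\mu(a)}} \geq (\mathbf{x}/\mathbf{x}^*)^{\mathbf{z}_{\mu(b)}}$ for $a \leq b$. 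Since consecutive complexes in $\mu_i$ are ordered consistently with $\mu$, both inequalities hold for every pair $\mu_i(j), \mu_i(j+1)$, which is exactly what the argument of Lemma \ref{cycliclemma} needs. Running that argument verbatim on cycle $i$ then yields $\langle \alpha, \mathbf{X}_i \rangle \leq 0$ on $\overline{\mathcal{S}}_\mu$.

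Finally, I would assemble the pieces: $\langle \alpha, \mathbf{f}(\mathbf{x}) \rangle = \sum_{i=1}^\delta \kappa_i \langle \alpha, \mathbf{X}_i \rangle \leq 0$ on $\overline{\mathcal{S}}_\mu$, since each $\kappa_i > 0$ and each term is nonpositive. I expect the main obstacle to be bookkeeping rather than conceptual: one must ensure that the induced ordering $\mu_i$ is the honest restriction of $\mu$, so that the cyclic successor $\mathbf{s}_{\mu_i(j)} = \mathbf{z}_{\mu_i(j)+1} - \mathbf{z}_{\mu_i(j)}$ is taken within cycle $i$'s own cyclic indexing while the ordering $\succ$ is inherited globally---this is precisely the $\mu(j+1)$-versus-$\mu(j)+1$ distinction flagged before Theorem \ref{result1}, now playing out on each subcycle at once. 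Keeping these two indexings separate for every cycle, and checking that the single $\alpha$ furnished by Lemma \ref{alpha} serves all of them, is the only delicate part.
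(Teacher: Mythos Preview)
Your proposal is correct and follows essentially the same approach as the paper: invoke Lemma \ref{alpha} once to obtain a single $\alpha$, decompose $\mathbf{f}$ into cyclic pieces via Lemma \ref{yo}, restrict the global ordering $\mu$ to each cycle to obtain $\mu_i$, and then rerun the argument of Lemma \ref{cycliclemma} on each piece using Theorem \ref{result1} and Lemma \ref{slemma}. Your explicit telescoping justification for why both the $\alpha$-inequalities and the stratum inequalities survive restriction to subcycles is in fact slightly more detailed than the paper's treatment of that step.
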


\begin{proof}


Consider a permutation operator $\mu$ satisfying $\overline{\mathcal{S}}_\mu \cap L_I \not= \emptyset$. By Lemma \ref{alpha} there exists an $\alpha \in \mathbb{R}_{\leq 0}^m$ satisfying
\begin{equation}
\label{55}
\left.  \begin{array}{l} \alpha_i < 0, \mbox{ for } i \in I \\ \alpha_i = 0, \mbox{ for } i \not\in I \end{array} \right.
\end{equation}
and
\begin{equation}
\label{56}
\langle \mathbf{z}_{\mu(i)} - \mathbf{z}_{\mu(i+1)}, \alpha \rangle \geq 0, \hspace{0.2in} \mbox{for } i = 1, \ldots, n-1.
\end{equation}
The form of $\alpha$ from (\ref{55}) is what we need for the theorem. We now want to use (\ref{56}) to determine the sign of $\langle \alpha, \mathbf{f}(\mathbf{x}) \rangle$.

Since the system is complex balanced, by Lemma \ref{yo} we have
\[\langle \alpha, \mathbf{f}(\mathbf{x}) \rangle = \kappa_1 \langle \alpha, \mathbf{X}_1 \rangle + \cdots + \kappa_\delta \langle \alpha, \mathbf{X}_\delta \rangle \]
where the $\kappa_i$ are positive constants determined by the rate constants and the $\mathbf{X}_i$ have the form (\ref{equation2}). Each $\mathbf{X}_i$ corresponds to a cycle in the cyclic decomposition of the system where the $i^{th}$ cycle is indexed $\left\{ \nu_1^{(i)}, \nu_2^{(i)}, \ldots, \nu_{l_i}^{(i)}, \nu_1^{(i)} \right\}$. The overall ordering
\begin{equation}
\label{yoyo}
\mu(1) \; \succ \; \mu(2) \; \succ \; \cdots \; \succ \; \mu(n)
\end{equation}
implies an ordering on the complex indices $\left\{ \nu_1^{(i)}, \ldots, \nu_{l_i}^{(i)} \right\}$. We can do this by simply removing the elements from (\ref{yoyo}) which do not correspond to indices in the set $\left\{ \nu_1^{(i)}, \ldots, \nu_{l_i}^{(i)} \right\}$ whilst otherwise preserving the ordering.



Now consider a single term $\langle \alpha, \mathbf{X}_i \rangle$, $i=1, \ldots, \delta$. Firstly, we reindex the complexes so that the relevant cycle is $\left\{ 1, 2, \ldots, l_i, 1 \right\}$. We let $\mu_i$ denote the permutation operator which preserves the ordering implied by $\mu$ on this reduced index set, after reindexing. (For example, consider a system with five complexes and the cycle $\left\{ 2, 4, 1, 2\right\}$. Consider the permutation operator $\mu([1,2,3,4,5]) = [2,5,3,1,4] $. Then we reindex the cycle so that we have $\left\{ 1, 2, 3, 1 \right\}$ and $\mu_i([1,2,3]) = [1, 3, 2]$ since $2 \; \succ \; 1 \; \succ \; 4$ in the original ordering implied by $\mu$.)

Since $\mathbf{X}_i$ is cyclic and complex balanced, we can apply all of the results used in the proof of Lemma \ref{cycliclemma} to get
\begin{equation}
\label{stuff1}
\langle \alpha, \mathbf{X}_i \rangle = \sum_{i=1}^{l_i-1} \left( \left( \frac{\mathbf{x}}{\; \mathbf{x}^*} \right)^{\mathbf{z}_{\mu_i(j)}} - \left( \frac{\mathbf{x}}{\; \mathbf{x}^*} \right)^{\mathbf{z}_{\mu_i(j+1)}} \right) \cdot \Bigg\langle \alpha, \sum_{j=1}^i \mathbf{s}_{\mu_i(j)} \Bigg\rangle
\end{equation}
where $\mathbf{s}_{\mu_i(j)}=\mathbf{z}_{\mu_i(j)+1}-\mathbf{z}_{\mu_i(j)}$. Since the ordering of the complexes corresponding to elements in the $i^{th}$ cycle satisfy (\ref{yoyo}), we have
\[\left( \left( \frac{\mathbf{x}}{\; \mathbf{x}^*} \right)^{\mathbf{z}_{\mu_i(j)}} - \left( \frac{\mathbf{x}}{\; \mathbf{x}^*} \right)^{\mathbf{z}_{\mu_i(j+1)}} \right) \geq 0\]
for all $\mathbf{x} \in \overline{\mathcal{S}}_\mu$. Similarly, we can apply Lemma \ref{slemma} to show that
\begin{equation}
\label{stuff2}
\Bigg\langle \alpha, \sum_{j=1}^i \mathbf{s}_{\mu_i(j)} \Bigg\rangle = \sum_{j=1}^{l_i-1} \lambda_j \big\langle \mathbf{z}_{\mu_i(j)}-\mathbf{z}_{\mu_i(j+1)}, \alpha \big\rangle \leq 0
\end{equation}
where $\lambda_j \in \mathbb{Z}_{\leq 0}$ for $j=1, 2, \ldots, l_i-1$. This implies $\kappa_i \langle \alpha, \mathbf{X}_i \rangle \leq 0$. Since we can carry out this procedure for all $i=1, \ldots, \delta$, we have
\[\langle \alpha, \mathbf{f}(\mathbf{x}) \rangle = \kappa_1 \langle \alpha, \mathbf{X}_1 \rangle + \cdots + \kappa_\delta \langle \alpha, \mathbf{X}_\delta \rangle \leq 0\]
and we are done.



\end{proof}

\subsection{Applications}
\label{section6}

Several global stability results follow immediately from Theorem \ref{maintheorem}. In particular, this theorem is sufficient to guarantee solutions of (\ref{de}) do not approach the boundary for general complex balanced systems if they remain within a single stratum. This is clear because, if we take $T \geq 0$ to be the final time that a trajectory $\mathbf{x}(t)$ enters the relevant stratum, the linear functional $H(\mathbf{x}(t)) = \langle \alpha, \mathbf{x}(t) \rangle$, where $\alpha$ satisfies (\ref{33}), must satisfy $H(\mathbf{x}(t)) \leq H(\mathbf{x}(T)) < 0$ for all $t > T$ since $\frac{d}{dt}H(\mathbf{x}(t)) = \langle \alpha, \mathbf{f}(\mathbf{x}(t)) \rangle \leq 0$ for all $t > T$ by Theorem \ref{maintheorem}. This contradicts the observation that, if $\mathbf{x}(t)$ converges to $\mathbf{x}^* \in L_I$ then
\[\lim_{t \to \infty} H(\mathbf{x}(t)) = H(\mathbf{x}^*) = 0.\]

If multiple strata $\mathcal{S}_\mu$ intersect a given set $L_I$, however, we cannot guarantee the existence of a common $\alpha$ satisfying $\langle \alpha, \mathbf{f}(\mathbf{x}) \rangle \leq 0$ simultaneously within all such strata. Consequently, we cannot rule out the possibility that trajectories approach the boundary through creative maneouvering between strata.

This difficulty, however, does not always arise. The following results will complete the analysis for such systems. We have, however, purposefully kept the first result (Theorem \ref{bigtheorem}) general so it may be applied to systems outside the scope of complex balanced systems and the strata approach taken in this paper. We will make explicit the connection with our systems of interest in a later result (Corollary \ref{sob}).

Throughout this section, when we say that $U$ is a neighbourhood of $K$ in $\mathbb{R}_{\geq 0}^m$ we mean that $U$ is an open covering of $K$ restricted to $\mathbb{R}_{\geq 0}^m$.

\begin{theorem}
\label{bigtheorem}
Consider a general mass-action system with bounded solutions. Suppose that for every set $L_I$ corresponding to a semi-locking set $I$ there exists an $\alpha_I \in \mathbb{R}_{\leq 0}^m$ satisfying
\begin{equation}
\label{condition1}
\begin{array}{l} (\alpha_I)_i < 0, \mbox{ for } i \in I \\ (\alpha_I)_i = 0, \mbox{ for } i \not\in I \end{array}
\end{equation}
and the following property: for every compact subset $K$ of $L_I$, there exists a neighbourhood $U$ of $K$ in $\mathbb{R}_{\geq 0}^m$ such that
\begin{equation}
\label{condition2}
\langle \alpha_I, \mathbf{f}(\mathbf{x}) \rangle \leq 0 \mbox{ for all } \mathbf{x} \in U.
\end{equation}
Then $\omega(\mathbf{x}_0) \cap \partial \mathbb{R}_{>0}^m = \emptyset$ for all $\mathbf{x}_0 \in \mathbb{R}_{>0}^m$.
\end{theorem}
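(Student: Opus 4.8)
The plan is to argue by contradiction, turning the hypothesis (\ref{condition2}) into a one-sided Lyapunov estimate for the linear functional $H(\mathbf{x}) = \langle \alpha_I, \mathbf{x} \rangle$ evaluated along a trajectory. Suppose, contrary to the claim, that $\omega(\mathbf{x}_0) \cap \partial \mathbb{R}_{>0}^m \neq \emptyset$ for some $\mathbf{x}_0 \in \mathbb{R}_{>0}^m$. Since solutions are bounded, $\omega(\mathbf{x}_0)$ is a nonempty compact set. Because the sets $L_I$ decompose $\partial \mathbb{R}_{>0}^m$, the family $\mathcal{I} = \{ I : \omega(\mathbf{x}_0) \cap L_I \neq \emptyset \}$ is finite and contains at least one nonempty index set; by Lemma \ref{lemma461} every member of $\mathcal{I}$ is a semi-locking set, so the hypothesis furnishes a corresponding vector $\alpha_I$ satisfying (\ref{condition1}) and (\ref{condition2}). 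I would fix $I$ to be a \emph{maximal} (under inclusion) nonempty element of $\mathcal{I}$ and set $H(\mathbf{x}) = \langle \alpha_I, \mathbf{x} \rangle$. Property (\ref{condition1}) then gives $H(\mathbf{x}) \leq 0$ on all of $\mathbb{R}_{\geq 0}^m$, with $H(\mathbf{x}) = 0$ precisely when $x_i = 0$ for every $i \in I$; in particular $H$ is strictly negative along the interior trajectory $\mathbf{x}(t)$.

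The step I expect to be the crux is reconciling the descent hypothesis (\ref{condition2}), which is attached to the single face $L_I$, with the possibility that the trajectory approaches the zero-level set of $H$ through \emph{lower}-dimensional faces $L_J$ with $J \supsetneq I$, where nothing constrains the sign of $\langle \alpha_I, \mathbf{f} \rangle$. I would resolve this using the maximality of $I$. Consider the compact set $K = \omega(\mathbf{x}_0) \cap \{ \mathbf{x} : H(\mathbf{x}) = 0 \}$, which is nonempty since any $\mathbf{p} \in \omega(\mathbf{x}_0) \cap L_I$ satisfies $H(\mathbf{p}) = 0$. Any $\mathbf{q} \in K$ has $q_i = 0$ for all $i \in I$, hence lies in some $L_J$ with $J \supseteq I$ and $J \in \mathcal{I}$; maximality of $I$ forces $J = I$, so in fact $K \subseteq L_I$. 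Applying (\ref{condition2}) to this compact $K$ produces an open neighbourhood $U$ of $K$ in $\mathbb{R}_{\geq 0}^m$ on which $\langle \alpha_I, \mathbf{f}(\mathbf{x}) \rangle \leq 0$ everywhere.

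With $U$ in hand, the remaining steps are a trapping argument and an ODE monotonicity argument. First I would establish that there exist $\epsilon > 0$ and $T_0$ such that $\mathbf{x}(t) \in U$ whenever $t \geq T_0$ and $H(\mathbf{x}(t)) \geq -\epsilon$: if this failed, a sequence of times escaping $U$ with $H \to 0$ would, by boundedness, subconverge to a point of $\omega(\mathbf{x}_0) \cap \{H = 0\} = K \subseteq U$, contradicting that $U$ is open. Consequently, for $t \geq T_0$ the inequality $H(\mathbf{x}(t)) \geq -\epsilon$ implies $\frac{d}{dt} H(\mathbf{x}(t)) = \langle \alpha_I, \mathbf{f}(\mathbf{x}(t)) \rangle \leq 0$; that is, $H$ cannot increase while it sits above the level $-\epsilon$.

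Finally I would derive the contradiction. Since $K$ is nonempty, the trajectory returns arbitrarily close to a point where $H = 0$, so $\limsup_{t \to \infty} H(\mathbf{x}(t)) = 0$. Choose $t_1 \geq T_0$ with $-\epsilon < H(\mathbf{x}(t_1)) < 0$. A short continuity argument using the previous paragraph shows $H(\mathbf{x}(t))$ can never re-cross upward through any level in the band $(-\epsilon, 0)$, and therefore $H(\mathbf{x}(t)) \leq H(\mathbf{x}(t_1)) < 0$ for all $t \geq t_1$. This bounds $\limsup_{t \to \infty} H(\mathbf{x}(t))$ away from $0$, contradicting the previous sentence and completing the proof. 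I note that this route uses boundedness only to guarantee compactness of $\omega(\mathbf{x}_0)$ and the existence of convergent subsequences, and does not require invariance of the $\omega$-limit set; the genuinely delicate point remains the maximal-face selection that confines $K$ to $L_I$.
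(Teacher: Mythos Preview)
Your argument is correct and follows essentially the same route as the paper's proof: the paper runs an induction on $|I|$ from $m$ down to $1$, which is exactly your ``choose a maximal $I$'' step in inductive clothing, and in both cases the point is to force $K=\omega(\mathbf{x}_0)\cap\{H=0\}$ (equivalently $\omega(\mathbf{x}_0)\cap\overline{L}_I$) to lie inside $L_I$ so that hypothesis (\ref{condition2}) applies. The only cosmetic difference is in the endgame: the paper tracks the entry points $\mathbf{x}(\tilde t_k)\in\mathbb{R}_{\geq 0}^m\setminus U$ and shows they subconverge to a point of $K\setminus U=\emptyset$, whereas you first prove the trapping statement ``$H\geq-\epsilon\Rightarrow\mathbf{x}(t)\in U$'' and then use monotonicity of $H$ in the band $(-\epsilon,0)$; these are interchangeable ways of extracting the same contradiction.
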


\begin{proof}

We will let $|I|$ denote the number of elements in the set $I$ (i.e. the number of indices $i$ such that $x_i = 0$ for $\mathbf{x} \in L_I$).

We will prove that $\omega(\mathbf{x}_0) \cap \partial \mathbb{R}_{>0}^m = \emptyset$ by showing that $\omega(\mathbf{x}_0) \cap L_I = \emptyset$ for all $I$ from $|I|=m$ to $|I|=1$. This induction corresponds to the dimension of $L_I$ going from $0$ (the origin) to $m-1$. Since $\partial \mathbb{R}_{>0}^m$ is completely partitioned into such sets, this is sufficient to prove the claim.

Our inductive step will consist in showing that $\omega(\mathbf{x}_0) \cap L_{\tilde{I}} \not= \emptyset$ for any semi-locking set $\tilde{I}$ satisfying $|\tilde{I}| = k$, $1 \leq k < m$, implies $(\omega(\mathbf{x}_0) \cap \overline{L}_{\tilde{I}}) \setminus L_{\tilde{I}} \not= \emptyset$. This is sufficient to violate the inductive hypothesis that $\omega(\mathbf{x}_0) \cap L_I = \emptyset$ for all $I$ such that $|I|>k$.


We take $\mathbf{x}_0 \in \mathbb{R}_{>0}^m$ to be arbitrary and fixed throughout the following induction.

\textbf{Base case: } Consider $|I| = m$ (i.e. $I = \mathcal{S}$) and suppose that $I = \mathcal{S}$ is a semi-locking set. We have $L_I=\left\{ \mathbf{0} \right\}$ for which $K=\left\{ \mathbf{0} \right\}$ is trivially a compact subset. By assumption, there exists an $\alpha_I \in \mathbb{R}_{<0}^m$ such that $\langle \alpha_I, \mathbf{f}(\mathbf{x}) \rangle \leq 0$ for all $\mathbf{x} \in U$ where $U$ is some neighbourhood of $K$ in $\mathbb{R}_{\geq 0}^m$. It follows that $\mathbf{x}(t) \in \left\{ \mathbf{x} \in \mathbb{R}_{>0}^m \; | \; \langle \alpha_I, \mathbf{x} \rangle < -\delta \right\}$ for all $t > 0$ and some $\delta > 0$. In other words, we can make a ``cut'' sufficiently close to the origin such that solutions do not enter the cut out area. Consequently $\omega(\mathbf{x}_0) \cap L_I = \emptyset$ for $I = \mathcal{S}$ if $I$ is a semi-locking set.

Since $\omega(\mathbf{x}_0) \cap L_I = \emptyset$ for all $I$ which are not semi-locking sets by Lemma \ref{lemma461}, it follows that $\omega(\mathbf{x}_0) \cap L_I = \emptyset$ for the base case $|I| = m$.

\textbf{Inductive case: } Consider $1 \leq k < m$ and assume that $\omega(\mathbf{x}_0) \cap L_I = \emptyset$ for all $|I| > k$. We will prove that $\omega(\mathbf{x}_0) \cap L_I = \emptyset$ for all $|I| \geq k$.

Assume $\omega(\mathbf{x}_0) \cap L_{\tilde{I}} \not= \emptyset$ for some $\tilde{I}$ such that $|\tilde{I}| = k$ and $\tilde{I}$ is a semi-locking set. Since every $\mathbf{x} \in \overline{L}_{\tilde{I}} \setminus L_{\tilde{I}}$ satisfies $\mathbf{x} \in L_I$ for some $I$ such that $|I| > k$, the inductive hypothesis implies $(\omega(\mathbf{x}_0) \cap \overline{L}_{\tilde{I}}) \setminus L_{\tilde{I}} = \emptyset$, which is equivalent to $(\omega(\mathbf{x}_0) \cap \overline{L}_{\tilde{I}}) \subset L_{\tilde{I}}$. In order to prove the inductive step, we will show that assuming $\omega(\mathbf{x}_0) \cap L_{\tilde{I}} \not= \emptyset$ violates $(\omega(\mathbf{x}_0) \cap \overline{L}_{\tilde{I}}) \subset L_{\tilde{I}}$.

Consider the set $K = \omega(\mathbf{x}_0) \cap \overline{L}_{\tilde{I}}$. Since trajectories are bounded by assumption, $\omega(\mathbf{x}_0)$ is bounded, and consequently $K$ is a compact set. By the inductive hypothesis, this is a subset of $L_{\tilde{I}}$ so that $\langle \alpha_I, \mathbf{f}(\mathbf{x}) \rangle \leq 0$ for all $\mathbf{x} \in U$ where $U$ is some neighbourhood of $K$ in $\mathbb{R}_{\geq 0}^m$.

Consider the linear functional $H(\mathbf{x}) = \langle \alpha_I, \mathbf{x} \rangle$. By (\ref{condition1}) and (\ref{condition2}), $H(\mathbf{x})$ satisfies:
\begin{enumerate}
\item
$H(\mathbf{x}) = 0$ if and only if $\mathbf{x} \in \overline{L}_{\tilde{I}}$,
\item
$H(\mathbf{x}) < 0$ for $\mathbf{x} \in \mathbb{R}_{>0}^m$, and
\item
$\frac{d}{dt} H(\mathbf{x}(t)) = \langle \alpha_I, \mathbf{f}(\mathbf{x}(t)) \rangle \leq 0$ for all $t > 0$ such that $\mathbf{x}(t) \in U$.
\end{enumerate}
\noindent Now consider an arbitrary $\mathbf{y} \in K$. Since $\mathbf{y} \in \omega(\mathbf{x}_0)$, $U$ is a neighbourhood of $\mathbf{y}$, and $H(\mathbf{x})$ is continuous, we can select a sequence $\left\{ t_k \right\}$ $\displaystyle{(\lim_{k \to \infty} t_k = \infty)}$ such that $\left\{ \mathbf{x}(t_k) \right\} \subseteq U$, $\displaystyle{\lim_{k \to \infty}} \mathbf{x}(t_k) = \mathbf{y}$, and $\displaystyle{\lim_{k \to \infty}} H(\mathbf{x}(t_{k})) = H(\mathbf{y}) = 0$.

By Property 3 of $H(\mathbf{x}(t))$ given above, $H(\mathbf{x}(t))$ may not increase while remaining in $U$ and, consequently, in order to approach $\mathbf{y} \in L_{\tilde{I}}$, $\mathbf{x}(t)$ must enter and exit $U$ an infinite number of times. Since $U$ is relatively open in $\mathbb{R}_{\geq 0}^m$ and $\mathbf{x}(t) \in \mathbb{R}_{>0}^m$ for all $t \geq 0$ by Proposition \ref{proposition2}, we can find a sequence $\left\{ \tilde{t}_k \right\}$ corresponding to the entry points $\left\{ \mathbf{x}(\tilde{t}_k) \right\} \subset (\mathbb{R}_{\geq 0}^m \setminus U)$ (i.e. $\mathbf{x}(t) \in U$ for $t \in (\tilde{t}_k,t_k)$). Because trajectories are bounded and $\mathbb{R}_{\geq 0} \setminus U$ is closed, the sequence $\left\{ \mathbf{x}(\tilde{t}_k) \right\}$ has a convergent subsequence on $\mathbb{R}_{\geq 0} \setminus U$. We will denote this sequence $\left\{ \mathbf{x}(\tilde{t}_{k_i}) \right\}$ and let $\tilde{\mathbf{y}}$ be the point such that $\displaystyle{\lim_{i \to \infty}} \mathbf{x}(\tilde{t}_{k_i}) = \tilde{\mathbf{y}} \in \omega(\mathbf{x}_0)$. Since $H(\mathbf{x}(t))$ may not increase for $t \in (\tilde{t}_k,t_k)$ and is bounded above by zero, we have $0 > H(\mathbf{x}(\tilde{t}_k)) \geq H(\mathbf{x}(t_k))$. Since $\displaystyle{\lim_{k \to \infty}}H(\mathbf{x}(t_k)) = 0$, it follows that $\displaystyle{\lim_{k \to \infty}}H(\mathbf{x}(\tilde{t}_k)) = H(\tilde{\mathbf{y}}) = 0$, so that $\tilde{\mathbf{y}} \in \overline{L}_{\tilde{I}}$ by Property 1 of $H(\mathbf{x})$.

In total we have that $\tilde{\mathbf{y}} \in \omega(\mathbf{x}_0) \cap \overline{L}_{\tilde{I}} \cap (\mathbb{R}_{\geq 0} \setminus U) = K \cap (\mathbb{R}_{\geq 0} \setminus U)$. We recall, however, that $U$ was a neighbourhood of $K$ in $\mathbb{R}_{\geq 0}^m$ so that $K \cap (\mathbb{R}_{\geq 0} \setminus U) = \emptyset$. It follows that our original assumption must have been in error, so that $\omega(\mathbf{x}_0) \cap L_{\tilde{I}} = \emptyset$ for all semi-locking sets $\tilde{I}$ satisfying $|\tilde{I}| = k$.

Since $\omega(\mathbf{x}_0) \cap L_I = \emptyset$ for all $I$ which are not semi-locking sets by Lemma \ref{lemma461}, it follows that $\omega(\mathbf{x}_0) \cap L_I = \emptyset$ if $|I|=k$, and our inductive step is shown.

Since $\partial \mathbb{R}_{>0}^m$ can be completely partitioned into sets $L_I$, $1 \leq | I | \leq m$, it follows that
\[\omega(\mathbf{x}_0) \cap \left[ \bigcup_{1 \leq | I | \leq m} L_I \right] = \omega(\mathbf{x}_0) \cap \partial \mathbb{R}_{>0}^m = \emptyset\]
and, since $\mathbf{x}_0 \in \mathbb{R}_{>0}^m$ was chosen arbitrarily, the result follows.

\end{proof}

It should be noted that, since $\mathbb{R}_{>0}^m$ decomposes completely into compatibility classes, Theorem \ref{bigtheorem} is sufficient to guarantee persistence within \emph{all} compatibility classes permitted by the mechanism. This stands in contrast to several results in the literature which present conditions sufficient to guarantee persistence relative to a specified compatibility class $\mathsf{C}_{\mathbf{x}_0}$ but permit other compatibility classes of the same mechanism to be non-persistent (see \cite{A-S} and \cite{C-D-S-S}). Theorem \ref{bigtheorem} can be easily adapted for specific compatibility classes by considering the sets $F_I = \mathsf{C}_{\mathbf{x}_0} \cap L_I$ throughout the argument rather than the sets $L_I$.


We now relate Theorem \ref{bigtheorem} to the methodology of Section \ref{sectionsection}.

\begin{lemma}
\label{lemma192}
Let $M_I$ denote the set of permutation operators $\mu$ such that $\overline{\mathcal{S}}_\mu \cap L_I \not= \emptyset$ for a fixed $I$. Then, for every compact subset $K$ of $L_I$, there exists a neighbourhood $U$ of $K$ in $\mathbb{R}_{\geq 0}^m$ such that
\[\displaystyle{U \subseteq \bigcup_{\mu \in M_I} \overline{\mathcal{S}}_\mu}.\]
\end{lemma}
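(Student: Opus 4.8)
The plan is to exploit the finiteness of the set of permutation operators together with a compact/closed separation argument. First I would set aside the permutations that do \emph{not} reach $L_I$: let $N_I$ be the (finite) collection of permutation operators $\mu$ generating a non-empty stratum with $\overline{\mathcal{S}}_\mu \cap L_I = \emptyset$, i.e. the complement of $M_I$ among permutations with non-empty strata, and put $C = \bigcup_{\mu \in N_I} \overline{\mathcal{S}}_\mu$. Since $C$ is a finite union of closed sets it is closed, and by the defining property of $N_I$ it is disjoint from $L_I$, hence from $K \subseteq L_I$. Because $K$ is compact, $C$ is closed, and $K \cap C = \emptyset$, I would invoke the standard metric fact that $\mathrm{dist}(K,C) =: \epsilon > 0$ (taking $\epsilon = +\infty$ if $C = \emptyset$). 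Setting $U = \{ \mathbf{x} \in \mathbb{R}_{\geq 0}^m : \mathrm{dist}(\mathbf{x},K) < \epsilon \}$ then gives a relatively open neighbourhood of $K$ in $\mathbb{R}_{\geq 0}^m$ which, by construction of $\epsilon$, is disjoint from $C$: any $\mathbf{x} \in C$ satisfies $\mathrm{dist}(\mathbf{x},K) \geq \epsilon$.

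It then remains to verify the inclusion $U \subseteq \bigcup_{\mu \in M_I} \overline{\mathcal{S}}_\mu$. For $\mathbf{x} \in U \cap \mathbb{R}_{>0}^m$ this is immediate: by the covering remark following Definition \ref{strata}, every positive point lies either in a unique stratum or on a boundary separating strata, so in every case $\mathbf{x} \in \overline{\mathcal{S}}_\mu$ for some $\mu$ with $\mathcal{S}_\mu \neq \emptyset$; since $\mathbf{x} \in U$ and $U \cap C = \emptyset$, that $\mu$ cannot lie in $N_I$, and therefore $\mu \in M_I$.

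The one delicate point, which I expect to be the main obstacle, is the boundary case $\mathbf{x} \in U \cap \partial \mathbb{R}_{\geq 0}^m$, where a priori $\mathbf{x}$ need not lie in any stratum closure at all (the covering remark only addresses points of $\mathbb{R}_{>0}^m$). Here I would use that $U$ is relatively open in $\mathbb{R}_{\geq 0}^m$ and that $\mathbb{R}_{>0}^m$ is dense in $\mathbb{R}_{\geq 0}^m$ to produce a sequence of positive points $\mathbf{x}^{(k)} \in U \cap \mathbb{R}_{>0}^m$ with $\mathbf{x}^{(k)} \to \mathbf{x}$. By the positive case just treated, each $\mathbf{x}^{(k)}$ lies in some $\overline{\mathcal{S}}_{\mu_k}$ with $\mu_k \in M_I$, and since there are only finitely many permutation operators, a single $\mu \in M_I$ must occur for infinitely many $k$. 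Passing to that subsequence and using that $\overline{\mathcal{S}}_\mu$ is closed yields $\mathbf{x} \in \overline{\mathcal{S}}_\mu \subseteq \bigcup_{\mu \in M_I} \overline{\mathcal{S}}_\mu$, which completes the inclusion and hence the proof. The essential ingredients are thus the finiteness of the permutation set (used both to make $C$ closed and to run the pigeonhole step), the compactness of $K$ against the closed set $C$ to obtain a uniform separation $\epsilon$, and the density of $\mathbb{R}_{>0}^m$ in $\mathbb{R}_{\geq 0}^m$ to push the positive-orthant covering out to the boundary.
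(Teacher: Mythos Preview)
Your proof is correct and rests on essentially the same ingredients as the paper's---finiteness of the set of permutation operators, closedness of each $\overline{\mathcal{S}}_\mu$, and a pigeonhole/subsequence step---though you construct $U$ directly via $\mathrm{dist}(K,C)$ whereas the paper argues by contradiction with a sequence escaping every neighbourhood. Your explicit handling of the boundary case $\mathbf{x} \in U \cap \partial\mathbb{R}_{\geq 0}^m$ is in fact more careful than the paper's own argument, which tacitly assumes that a point outside $\bigcup_{\mu \in M_I}\overline{\mathcal{S}}_\mu$ already lies in some $\overline{\mathcal{S}}_\mu$ with $\mu \notin M_I$.
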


\begin{proof}
Suppose there is a compact subset $K$ of $L_I$ such that, for every neighbourhood $U$ of $K$ in $\mathbb{R}_{\geq 0}^m$, $U \subseteq \cup_{\mu \in M_I} \overline{\mathcal{S}}_\mu$ is violated. It follows that there exists a sequence $\left\{ \mathbf{x}^k \right\} \subseteq  \cup_{\mu \not\in M_I} \overline{\mathcal{S}}_\mu$ such that $\mathbf{x}^k$ approaches $L_I$ as $k \to \infty$. Since $K$ is compact, we may select the sequence so that $\mathbf{x}^k \to \mathbf{x}$ for some specific $\mathbf{x} \in L_I$.

Since there are finite strata, we can select a subsequence $\left\{ \mathbf{x}^{k_i} \right\} \subseteq \mathcal{S}_\mu$ for a fixed $\mu \not\in M_I$; however, this implies $\displaystyle{\lim_{i \to \infty}} \mathbf{x}^{k_i} = \mathbf{x} \in \overline{\mathcal{S}}_\mu \cap L_I$. This contradicts $\mu \not\in M_I$. Consequently, our assumption was in error, and $U \subseteq \cup_{\mu \in M_I} \overline{\mathcal{S}}_\mu$ for some neighbourhood $U$ of $K$ in $\mathbb{R}_{\geq 0}^m$. The result follows.
\end{proof}

Given Lemma \ref{lemma192} and Theorem \ref{bigtheorem}, we can see that (\ref{condition2}) corresponds to the existence of a common $\alpha_I$ existing in all strata adjacent to a given set $L_I$, which is the desired condition. In general, however, it is difficult to verify this condition directly. The following result gives testable conditions from which (\ref{condition2}) follows. It also answers the question of global stability.

\begin{corollary}
\label{sob}
Consider a complex balanced system. Let $M_I$ denote the set of permutation operators $\mu$ such that $\overline{\mathcal{S}}_\mu \cap L_I \not= \emptyset$ for a fixed $I$. Suppose that for every fixed $I$, $1 \leq |I| < m$, corresponding to a semi-locking set one of Condition 1 or Condition 2 given below is satisfied. Then the unique positive complex balanced equilibrium $\mathbf{x}^*$ of $\mathsf{C}_{\mathbf{x}_0}$ is a global attractor for $\mathsf{C}_{\mathbf{x}_0}$. \\

\noindent \emph{\textbf{Condition 1:}} We will say Condition 1 is satisfied if there exists an $\alpha_I \in \mathbb{R}_{\leq 0}^m$ satisfying (\ref{condition1}) such that, for all $i = 1, 2, \ldots, n-1$ and all $\mu \in M_I$,
\[\langle \mathbf{z}_{\mu(i)} - \mathbf{z}_{\mu(i+1)}, \alpha_I \rangle \geq 0.\]
\\
\noindent \emph{\textbf{Condition 2:}} Consider the cycles $\left\{ \nu_{1}^{(i)}, \nu_{2}^{(i)}, \ldots, \nu_{l_i}^{(i)}, \nu_{1}^{(i)} \right\}$, $i=1, 2, \ldots, \delta$, in the cyclic decomposition of a complex balanced system according to Lemma \ref{yo}. We will reindex each cycle to $\left\{ 1, 2, \ldots, l_i, 1 \right\}$ and let $\mu_i$, $i = 1, \ldots, \delta,$ denote the appropriately reindexed permutation operator restricted to the complexes in the $i^{th}$ cycle. We will say Condition 2 is satisfied if there exists an $\alpha_I \in \mathbb{R}_{\leq 0}^m$ satisfying (\ref{condition1}) such that, for all $i = 1, 2, \ldots, \delta$ and all $\mu \in M_I$,
\[\Bigg\langle \sum_{j = 1}^k \mathbf{s}_{\mu_i(j)}, \alpha_I \Bigg\rangle \leq 0, \hspace{0.2in} \mbox{ for }k = 1, 2, \ldots, l_i-1,\]
where $\mathbf{s}_{\mu_i(j)} = \mathbf{z}_{\mu_i(j)}-\mathbf{z}_{\mu_i(j)+1}$.\\

\end{corollary}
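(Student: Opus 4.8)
The plan is to recognize Corollary \ref{sob} as nothing more than a verification of the hypotheses of Theorem \ref{bigtheorem} in the complex balanced setting, followed by an appeal to the structure theorem for $\omega$-limit sets. First I would observe that a complex balanced system admits the Horn--Jackson pseudo-Helmholtz Lyapunov function, whose sublevel sets meet each $\mathsf{C}_{\mathbf{x}_0}$ in a bounded set; since this function is non-increasing along trajectories, solutions are bounded and Theorem \ref{bigtheorem}---in the form adapted to a fixed $\mathsf{C}_{\mathbf{x}_0}$ through the sets $F_I = \mathsf{C}_{\mathbf{x}_0} \cap L_I$, as noted in the remark following that theorem---is applicable. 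It therefore suffices to produce, for each semi-locking set $I$ with $1 \le |I| < m$, a single $\alpha_I \in \mathbb{R}_{\le 0}^m$ obeying (\ref{condition1}) and the neighbourhood inequality (\ref{condition2}). Once $\omega(\mathbf{x}_0) \cap \partial \mathbb{R}_{>0}^m = \emptyset$ is established, Theorem \ref{wlimitsettheorem} forces $\omega(\mathbf{x}_0)$ to coincide with the unique positive complex balanced equilibrium $\mathbf{x}^*$ of $\mathsf{C}_{\mathbf{x}_0}$ guaranteed by Theorem \ref{stabilitytheorem}, which is exactly the assertion that $\mathbf{x}^*$ is a global attractor of $\mathsf{C}_{\mathbf{x}_0}$.

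The reduction of (\ref{condition2}) is handled by Lemma \ref{lemma192}. Fixing $I$ and a compact $K \subseteq L_I$, that lemma furnishes a neighbourhood $U$ of $K$ with $U \subseteq \bigcup_{\mu \in M_I} \overline{\mathcal{S}}_\mu$, so it is enough to verify $\langle \alpha_I, \mathbf{f}(\mathbf{x}) \rangle \le 0$ on each closed stratum $\overline{\mathcal{S}}_\mu$ with $\mu \in M_I$, using one common $\alpha_I$. This is precisely the point of demanding a single $\alpha_I$ that works simultaneously across all of $M_I$: Lemma \ref{alpha} by itself yields only an $\alpha$ per stratum, and Conditions 1 and 2 are exactly the hypotheses that such choices can be made uniform.

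To evaluate $\langle \alpha_I, \mathbf{f}(\mathbf{x}) \rangle$ on a fixed $\overline{\mathcal{S}}_\mu$ I would replay the computation of Theorem \ref{maintheorem}. Writing $\mathbf{f} = \sum_{i=1}^\delta \kappa_i \mathbf{X}_i$ by the cyclic decomposition of Lemma \ref{yo}, reindexing each cycle and restricting the $\mu$-ordering to it to obtain $\mu_i$, one reaches the representation (\ref{stuff1}) in which each factor $(\mathbf{x}/\mathbf{x}^*)^{\mathbf{z}_{\mu_i(j)}} - (\mathbf{x}/\mathbf{x}^*)^{\mathbf{z}_{\mu_i(j+1)}}$ is nonnegative on $\overline{\mathcal{S}}_\mu$ by Definition \ref{strata}. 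It then remains only to fix the sign of the inner products $\langle \alpha_I, \sum_{j=1}^k \mathbf{s}_{\mu_i(j)} \rangle$, and here the two conditions diverge. Under Condition 1 the inequalities $\langle \mathbf{z}_{\mu(i)} - \mathbf{z}_{\mu(i+1)}, \alpha_I \rangle \ge 0$ telescope along the full ordering down to $\langle \mathbf{z}_{\mu_i(j)} - \mathbf{z}_{\mu_i(j+1)}, \alpha_I \rangle \ge 0$ on the reduced cycle ordering, whereupon Lemma \ref{slemma} (expressing the partial sum with nonpositive integer coefficients) delivers $\langle \alpha_I, \sum_{j=1}^k \mathbf{s}_{\mu_i(j)} \rangle \le 0$ exactly as in (\ref{stuff2}); under Condition 2 this sign is postulated directly, bypassing Lemma \ref{slemma}. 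Either way each term of (\ref{stuff1}) is a product of a nonnegative and a nonpositive quantity, so $\langle \alpha_I, \mathbf{X}_i \rangle \le 0$, and summing against $\kappa_i > 0$ gives $\langle \alpha_I, \mathbf{f}(\mathbf{x}) \rangle \le 0$ on $\overline{\mathcal{S}}_\mu$, hence on $U$.

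I expect the main obstacle to be bookkeeping rather than conceptual: the careful passage from the ordering implied by $\mu$ to the induced ordering $\mu_i$ on each cycle (and the telescoping it requires under Condition 1), together with the sign conventions for $\mathbf{s}_{\mu_i(j)}$, must be tracked so that the nonnegativity of the stratum factors and the nonpositivity of the inner products align correctly. The genuinely substantive content has already been isolated into Theorem \ref{bigtheorem}, Lemma \ref{lemma192}, Lemma \ref{slemma} and Theorem \ref{maintheorem}, so the corollary amounts to assembling these pieces once the uniformity of $\alpha_I$ over $M_I$ is supplied by hypothesis.
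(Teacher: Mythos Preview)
Your approach is essentially identical to the paper's: verify the hypotheses of Theorem \ref{bigtheorem} via Lemma \ref{lemma192} and the computation in Theorem \ref{maintheorem}, invoke boundedness from the Horn--Jackson Lyapunov function, and finish with Theorem \ref{wlimitsettheorem}. The one point you pass over is the case $|I|=m$: Theorem \ref{bigtheorem} as stated requires an $\alpha_I$ for \emph{every} semi-locking set, including $I=\mathcal{S}$, whereas the corollary only supplies hypotheses for $1\le |I|<m$; the paper closes this gap by citing the separate fact (Proposition 20 of \cite{C-D-S-S}) that $\omega(\mathbf{x}_0)\cap\{\mathbf{0}\}=\emptyset$ for complex balanced systems, so that the inductive base case of Theorem \ref{bigtheorem} is automatic. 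You should insert this remark where you assert ``it therefore suffices to produce, for each semi-locking set $I$ with $1\le |I|<m$, \ldots''.
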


\begin{proof}

The proof will proceed in the following steps. We will prove firstly that Condition 1 or 2 is sufficient to show $\langle \alpha_I, \mathbf{f}(\mathbf{x}) \rangle \leq 0$ for all $\mathbf{x} \in \cup_{\mu \in M_I} \overline{\mathcal{S}}_\mu$. We then show by Lemma \ref{lemma192} that the such systems satisfy the hypotheses of Theorem \ref{bigtheorem} so that $\omega(\mathbf{x}_0) \cap \partial \mathbb{R}_{>0}^m = \emptyset$. We then show that for complex balanced systems this is enough to prove the global stability of the positive equilibrium concentration in each positive compatibility class.

Consider a complex balanced system. We know that $\omega(\mathbf{x}_0) \cap L_I = \emptyset$ for all sets $L_I$ corresponding to non-semi-locking sets $I$ by Lemma \ref{lemma461}.  We also know that for complex balanced systems we have $\omega(\mathbf{x}_0) \cap \left\{ \mathbf{0} \right\} = \emptyset$ (see Proposition 20 of \cite{C-D-S-S}, for one proof). That is to say, we need only consider sets $L_I$ corresponding to semi-locking sets $I$ such that $1 \leq |I| < m$.

It is clear by the proof of Theorem \ref{maintheorem} that either Condition 1 (by (\ref{stuff2})) or Condition 2 (by (\ref{stuff1})) is sufficient to prove that $\langle \alpha_I, \mathbf{f}(\mathbf{x}) \rangle \leq 0$ for all $\mathbf{x} \in \cup_{\mu \in M_I} \overline{\mathcal{S}}_\mu$. This implies by Lemma \ref{lemma192} that for every compact subset $K$ of $L_I$ there is a neighbourhood $U$ of $K$ in $\mathbb{R}_{\geq 0}^m$ such that $\langle \alpha_I, \mathbf{f}(\mathbf{x}) \rangle \leq 0$ for all $\mathbf{x} \in U$. We know that solutions of (\ref{de}) are bounded for complex balanced systems since, for the function
\begin{equation}
\label{globallyapunovfunction}
L(\mathbf{x}) = \sum_{i=1}^m x_i(\ln(x_i)-\ln(x_i^*)-1)+x_i^*,
\end{equation}
we have $\frac{d}{dt}L(\mathbf{x}(t)) < 0$ for all $t \geq 0$ and $\mathbf{x}_0 \in \mathbb{R}_{>0}^m$ \cite{H-J1}. It follows by Theorem \ref{bigtheorem} that $\omega(\mathbf{x}_0) \cap L_I = \emptyset$ for all such sets $L_I$. Since we have considered all sets $L_I$, it follows that $\omega(\mathbf{x}_0) \cap \partial \mathbb{R}_{>0}^m = \emptyset$.

Since our system is complex balanced, it follows by Theorem \ref{stabilitytheorem} that there is precisely one equilibrium concentration $\mathbf{x}^* \in \mathbb{R}_{>0}^m$ in each positive stoichiometric compatibility class $\mathsf{C}_{\mathbf{x}_0}$. Since there are no $\omega$-limit points on the boundary of the positive orthant, by Theorem \ref{wlimitsettheorem} it follows that the only $\omega$-limit point is the positive equilibrium concentration. It follows that $\mathbf{x}^*$ is a global attractor for $\mathsf{C}_{\mathbf{x}_0}$ and we are done.
\end{proof}

Since Condition 1 implies Condition 2 by Lemma \ref{slemma}, but the converse does not necessarily hold, it is typically preferable to check Condition 2. In the following section, our approach will be to define a set $P$ of vectors $\sum_{j = 1}^k \mathbf{s}_{\mu_i(j)}, i=1, \ldots, \delta, k = 1, \ldots, l_i-1,$ and check Condition 2 relative to this set.


The following result corresponds to Corollary 4.5 of \cite{A-S}. It is a generalization of Theorem 23 of \cite{C-D-S-S} (stated Theorem \ref{detailedstability} here) to complex balanced systems.

\begin{corollary}
\label{mainresult}
Consider a complex balanced mass-action system whose stoichiometric subspace $S$ is two-dimensional. Then the unique positive complex balanced equilibrium $\mathbf{x}^*$ of $\mathsf{C}_{\mathbf{x}_0}$ is a global attractor for $\mathsf{C}_{\mathbf{x}_0}$.
\end{corollary}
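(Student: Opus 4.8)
The plan is to deduce the result from Corollary \ref{sob}: it suffices to produce, for every semi-locking set $I$ with $1 \le |I| < m$ whose face $L_I$ meets $\overline{\mathsf{C}}_{\mathbf{x}_0}$, a single vector $\alpha_I \in \mathbb{R}_{\le 0}^m$ of the prescribed sign pattern (strictly negative on $I$, zero off $I$) satisfying Condition 2 simultaneously for all $\mu \in M_I$. The remaining hypothesis of boundedness of solutions is automatic for complex balanced systems through the Lyapunov function $L(\mathbf{x})$ in (\ref{globallyapunovfunction}), exactly as invoked in the proof of Corollary \ref{sob}. Thus the entire problem reduces to the existence of a \emph{common} $\alpha_I$ across all strata adjacent to each relevant face.

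To exploit $\dim S = 2$, I would first observe that the only quantities appearing in Condition 2 are the within-cycle partial sums $\mathbf{w} = \sum_{j=1}^k \mathbf{s}_{\mu_i(j)}$, each of which is a signed sum of reaction edges from a single cycle and hence lies in $S$. Consequently $\langle \mathbf{w}, \alpha_I\rangle = \langle \mathbf{w}, \pi_S(\alpha_I)\rangle$, so the inequalities of Condition 2 depend on $\alpha_I$ only through its orthogonal projection $\beta := \pi_S(\alpha_I) \in S \cong \mathbb{R}^2$. Collecting the finite set $P_I \subset S$ of all such $\mathbf{w}$ over every $\mu \in M_I$, Condition 2 becomes precisely the requirement that $\beta$ lie in the polar cone of $\mathrm{cone}(P_I)$. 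Since $S$ is two-dimensional, a nonzero such $\beta$ exists if and only if $\mathrm{cone}(P_I) \neq S$, that is, $P_I$ lies in some closed half-plane through the origin. The remaining bookkeeping is to confirm that the resulting $\beta$ can be realized as $\pi_S(\alpha_I)$ for an $\alpha_I$ of the correct sign pattern, i.e.\ that $\beta \in -Q_I$ where $Q_I := \mathrm{cone}\{\pi_S(\mathbf{e}_p): p \in I\}$.

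For the half-plane property I would use the geometry available in two dimensions. The compatibility class $\mathsf{C}_{\mathbf{x}_0} = (\mathbf{x}_0 + S) \cap \mathbb{R}_{>0}^m$ is a relatively open convex polygon, and every stratum boundary $\{\langle \mathbf{z}_i - \mathbf{z}_j, \ln(\mathbf{x}/\mathbf{x}^*)\rangle = 0\}$ passes through the interior equilibrium $\mathbf{x}^*$. Hence the strata near $\mathbf{x}^*$ are cyclically ordered angular regions fanning out from $\mathbf{x}^*$, and by Lemma \ref{lemma192} the strata adjacent to a fixed face are exactly the members of $M_I$ occupying the angular arc subtended at $\mathbf{x}^*$ by the edge or vertex $\overline{\mathsf{C}}_{\mathbf{x}_0} \cap L_I$. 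Because $\mathbf{x}^*$ is an interior point of the polygon and $\overline{\mathsf{C}}_{\mathbf{x}_0} \cap L_I$ is a proper, convex (hence connected) face, this arc is contiguous and subtends an angle strictly less than $\pi$; therefore the directions it represents, and with them all of $P_I$, lie in a common half-plane of $S$, producing the desired $\beta$. Applying the same subtended-angle argument to the admissible approach directions $-Q_I$ places $\beta$ in $-Q_I$, so a genuine $\alpha_I$ exists. With $\alpha_I$ in hand for every relevant $I$, Corollary \ref{sob} applies and $\mathbf{x}^*$ is a global attractor of $\mathsf{C}_{\mathbf{x}_0}$.

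The main obstacle is precisely this half-plane (contiguity) step: showing that the strata adjacent to a given face occupy a sector of angular width less than $\pi$ and hence admit a common supporting functional. This is the one place where two-dimensionality is indispensable. In three or more dimensions the analogous cones generated by the partial-sum vectors can positively span all of $S$ — equivalently, the adjacent strata can wrap entirely around the face — so that no common $\alpha_I$ need exist; this is exactly the gap that leaves the general Global Attractor Conjecture (Proposition \ref{globalattractorconjecture}) open. I would therefore devote most of the argument to making the fan-from-$\mathbf{x}^*$ picture and the subtended-angle bound rigorous, taking particular care that the nonlinear logarithmic reparametrization $\mathbf{x} \mapsto \ln(\mathbf{x}/\mathbf{x}^*)$ preserves the cyclic ordering of strata around the equilibrium.
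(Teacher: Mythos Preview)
Your proposal is correct and follows essentially the same route as the paper: reduce to Corollary \ref{sob}, exploit the two-dimensionality of $S$ to produce a single $\alpha_I$ valid for all strata adjacent to each relevant face $L_I$, and drop any boundedness hypothesis via the Lyapunov function (\ref{globallyapunovfunction}). The paper's own proof simply invokes Theorem 23 of \cite{C-D-S-S} for the two-dimensional geometric step you sketch (the subtended-angle/half-plane argument), so your write-up is actually more explicit than the paper about where the crucial work lies.
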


\begin{proof}
With application of Corollary \ref{sob}, the proof follows identically to the proof of Theorem 23 contained in \cite{C-D-S-S}. We also notice that since trajectories of any complex balanced system are bounded by $L(\mathbf{x}(t)) \leq L(\mathbf{x}_0)$ for all $t \geq 0$, we may remove the assumption of boundedness. 
\end{proof}

\subsection{Examples}
\label{section7}

In this section, we present two examples. The first is an example where Corollary \ref{sob} can be applied while the second is an example where it cannot. \\

\textbf{Example 1:} The following example is given in \cite{A-S} as an example of a three-dimensional complex balanced system for which a general method of guaranteeing global stability is not known. The system considered is
\begin{equation}
\label{example1}
\mathcal{A}_1 \; \leftrightarrows \; \mathcal{A}_2 \; \leftrightarrows \; \mathcal{A}_1 + \mathcal{A}_2 \; \leftrightarrows \; \mathcal{A}_1 + \mathcal{A}_3.
\end{equation}
We assign $\mathcal{C}_1 = \mathcal{A}_1$, $\mathcal{C}_2 = \mathcal{A}_2$, $\mathcal{C}_3 = \mathcal{A}_1 + \mathcal{A}_2$, and $\mathcal{C}_4 = \mathcal{A}_1 + \mathcal{A}_3$, and $x_1 = [\mathcal{A}_1]$, $x_2 = [\mathcal{A}_2]$, and $x_3 = [\mathcal{A}_3]$. The system is complex balanced at all equilibrium concentrations so we need not consider the rate constants. The compatibility class $\mathsf{C}_{\mathbf{x}_0} = \mathbb{R}_{>0}^3$ is three-dimensional and the only non-trivial semi-locking set is $I = \left\{ 1, 2 \right\}$ so that we need only consider the set $L_I$ corresponding to this index set.

We will show that all strata such that $\overline{\mathcal{S}}_\mu \cap L_{\left\{1, 2 \right\}} \not= \emptyset$ have a common $\alpha_I \in \mathbb{R}_{\leq 0}^m$ satisfying (\ref{condition1}) and Condition 2 of Corollary \ref{sob}. There are six $\mu$ such that $\overline{\mathcal{S}}_\mu \cap L_{\left\{ 1, 2 \right\}} \not= \emptyset$:
\[\begin{array}{ll}
(1) \;\; \mu ([1,2,3,4]) = [2,4,1,3] \hspace{0.5in} & (4) \;\; \mu ([1,2,3,4]) = [2,1,4,3] \\ (2) \;\; \mu ([1,2,3,4]) = [4,2,1,3] \hspace{0.3in} & (5) \;\; \mu ([1,2,3,4]) = [1,2,4,3] \\ (3) \;\; \mu ([1,2,3,4]) = [4,1,2,3] & (6) \;\; \mu ([1,2,3,4]) = [1,4,2,3]. \end{array}\]
Since the vectors $\sum_{j=1}^k \mathbf{s}_{\mu_i(j)}$ are the vector coefficients of the bracketed strata terms in (\ref{9843}), it is instructive to rewrite the system of differential equations (\ref{de}) implied by the network (\ref{example1}) according to Theorem \ref{result1}. (This analysis is not, however, required to determine the set of all admissible vectors $\sum_{j=1}^k \mathbf{s}_{\mu_i(j)}$.) We will carry out the analysis for one stratum and leave the rest as an exercise.

The first stratum is given by
\begin{equation}
\label{05}
\mathcal{S}_{\mu} = \left\{ \mathbf{x} \in \mathbb{R}_{>0}^3 \; \left| \; \frac{x_2}{x_2^*} > \frac{x_1}{x_1^*} \cdot \frac{x_3}{x_3^*} > \frac{x_1}{x_1^*} > \frac{x_1}{x_1^*} \cdot \frac{x_2}{x_2^*} \right. \right\}.
\end{equation}
Since the system can be decomposed into the cycles $\left\{ 1, 2, 1 \right\}$, $\left\{ 2, 3, 2 \right\}$, and $\left\{ 3, 4, 3 \right\}$, according to Lemma \ref{yo} and Theorem \ref{result1}, the system (\ref{de}) can be written
\begin{equation}
\label{06}
\begin{split} & \frac{d\mathbf{x}}{dt} = \kappa_1 \left[ \begin{array}{c} 1 \\ -1 \\ 0 \end{array} \right] \left( \frac{x_2}{x_2^*} - \frac{x_1}{x_1^*} \right) + \kappa_2 \left[ \begin{array}{c} 1 \\ 0 \\ 0 \end{array} \right] \left( \frac{x_2}{x_2^*} - \frac{x_1}{x_1^*} \cdot \frac{x_2}{x_2^*} \right) \\ & \hspace{1.5in} + \kappa_3 \left[ \begin{array}{c} 0 \\ 1 \\ -1 \end{array}\right] \left( \frac{x_1}{x_1^*} \cdot \frac{x_3}{x_3^*} - \frac{x_1}{x_1^*} \cdot \frac{x_2}{x_2^*}\right) \end{split}
\end{equation}
where $\mathbf{x}^* = [x_1^*, x_2^*, x_3^*]^T$ is the unique positive complex balanced equilibrium point and $\kappa_1$, $\kappa_2$ and $\kappa_3$ are positive constants determined by the rate constants. In $\mathcal{S}_{\mu}$ the bracketed terms of (\ref{06}) are strictly positive so that the sign of $\langle \alpha_I, \mathbf{f}(\mathbf{x}) \rangle$ is determined by the vector terms alone. Consider a vector $\alpha_I \in \mathbb{R}_{\leq 0}^3$ satisfying (\ref{condition1}) for which 
\[\alpha_I = \lambda_1 (-1, 0, 0) + \lambda_2 (-1, -1, 0), \; \; \; \; \; \lambda_1 \geq 0, \lambda_2 \geq 0.\]
For any such $\alpha_I$ we have $\langle \alpha_I, \mathbf{f}(\mathbf{x}) \rangle \leq 0$, which is sufficient to show the linear function $H(\mathbf{x}(t)) = \langle \alpha_I, \mathbf{x}(t) \rangle$ repels trajectories from the set $L_{\left\{ 1, 2 \right\}}$ in the first stratum.

A similar analysis can be carried out in the five other strata. Removing repetition, the set of admissible vectors $\sum_{j=1}^k \mathbf{s}_{\mu(j)}$ is
\[ P = \left\{ \left[ \begin{array}{c} 1 \\ -1 \\ 0 \end{array} \right], \left[ \begin{array}{c} -1 \\ 1 \\ 0 \end{array} \right], \left[ \begin{array}{c} 1 \\ 0 \\ 0 \end{array} \right], \left[ \begin{array}{c} 0 \\ 1 \\ -1 \end{array} \right] \right\}.\]
Since $\alpha_I = (-1,-1,0)$ satisfies $\langle \alpha_I, \mathbf{v} \rangle \leq 0$ for all $\mathbf{v} \in P$, we have that $\langle \alpha_I, \mathbf{f}(\mathbf{x}) \rangle \leq 0$ for all $\mathbf{x} \in \overline{\mathcal{S}}_\mu$ where $\mathcal{S}_\mu$ is such that $\overline{\mathcal{S}}_\mu \cap L_{\left\{ 1, 2 \right\}} \not= \emptyset$. It follows by Corollary \ref{sob} that $\mathbf{x}^*$ is a global attractor for $\mathsf{C}_{\mathbf{x}_0} = \mathbb{R}_{>0}^3$ and we are done. \\

\textbf{Example 2:} Consider the system
\[\begin{array}{c} \mathcal{A}_1 \; \leftrightarrows \; 2\mathcal{A}_2 \\ \uparrow \hspace{0.5in} \downarrow \\ \mathcal{A}_2 + \mathcal{A}_3 \; \leftarrow \; \mathcal{A}_1 + \mathcal{A}_2. \end{array}\]
We assign $\mathcal{C}_1 = \mathcal{A}_1$, $\mathcal{C}_2 = 2\mathcal{A}_2$, $\mathcal{C}_3 = \mathcal{A}_1 + \mathcal{A}_2$, and $\mathcal{C}_4 = \mathcal{A}_2 + \mathcal{A}_3$, and $x_1 = [\mathcal{A}_1]$, $x_2 = [\mathcal{A}_2]$, and $x_3 = [\mathcal{A}_3]$. The system is complex balanced at all equilibrium concentrations so we need not consider the rate constants. As in the previous example, the compatibility class $\mathsf{C}_{\mathbf{x}_0} = \mathbb{R}_{>0}^3$ is three-dimensional and the only non-trivial semi-locking set is $I = \left\{ 1, 2 \right\}$ so that we need only consider the set $L_I$ corresponding to this index set.

We will show that there is no common $\alpha_I$ satisfying (\ref{condition1}) and Condition 2 of Corollary \ref{sob} for all strata such that $\overline{\mathcal{S}}_\mu \cap L_{\left\{ 1, 2 \right\}} \not= \emptyset$. Notice that since Condition 1 of Corollary \ref{sob} implies Condition 2 by Lemma \ref{slemma}, this is sufficient to show that neither condition is satisfied. There are five $\mu$ such that $\overline{\mathcal{S}}_\mu \cap L_{\left\{ 1, 2 \right\}} \not= \emptyset$:
\[\begin{array}{ll}
(1) \;\; \mu ([1,2,3,4]) = [1,4,2,3] \hspace{0.5in} & (4) \;\; \mu ([1,2,3,4]) = [1,4,3,2] \\ (2) \;\; \mu ([1,2,3,4]) = [4,1,2,3] \hspace{0.3in} & (5) \;\; \mu ([1,2,3,4]) = [4,1,3,2]. \\ (3) \;\; \mu ([1,2,3,4]) = [4,2,1,3] & \end{array}\]

Again, we carry out the analysis for the first stratum and leave the rest as an exercise. The stratum is given by
\begin{equation}
\label{005}
\mathcal{S}_{\mu} = \left\{ \mathbf{x} \in \mathbb{R}_{>0}^3 \; \left| \; \frac{x_1}{x_1^*} > \frac{x_2}{x_2^*} \cdot \frac{x_3}{x_3^*} > \left( \frac{x_2}{x_2^*} \right)^2 > \frac{x_1}{x_1^*} \cdot \frac{x_2}{x_2^*} \right. \right\}
\end{equation}
where $\mathbf{x}^* = [x_1^*, x_2^*, x_3^*]^T$ is the unique positive complex balanced equilibrium point.

Since the system can be decomposed into the cycles $\left\{ 1, 2, 1 \right\}$ and \\ $\left\{ 1, 2, 3, 4, 1 \right\}$, according to Lemma \ref{yo} and Theorem \ref{result1}, the system (\ref{de}) can be written
\begin{equation}
\label{09}
\begin{split} & \frac{d\mathbf{x}}{dt} = \kappa_1 \left[ \begin{array}{c} -1 \\ 2 \\ 0 \end{array} \right] \left( \frac{x_1}{x_1^*} - \left( \frac{x_2}{x_2^*} \right)^2 \right) + \kappa_2 \left\{ \left[ \begin{array}{c} -1 \\ 2 \\ 0 \end{array} \right] \left( \frac{x_1}{x_1^*} - \frac{x_2}{x_2^*} \cdot \frac{x_3}{x_3^*} \right) \right. \\ & \hspace{0.5in} + \left. \left[ \begin{array}{c} 0 \\ 1 \\ -1 \end{array}\right] \left( \frac{x_2}{x_2^*} \cdot \frac{x_3}{x_3^*} - \left( \frac{x_2}{x_2^*} \right)^2 \right) + \left[ \begin{array}{c} 1 \\ 0 \\ -1 \end{array}\right] \left( \left( \frac{x_2}{x_2^*} \right)^2 - \frac{x_1}{x_1^*} \cdot \frac{x_2}{x_2^*} \right) \right\} \end{split}
\end{equation}
where $\kappa_1$ and $\kappa_2$ are positive constants determined by the rate constants. Consider a vector $\alpha_I \in \mathbb{R}_{\leq 0}^3$ satisfying (\ref{condition1}) for which
\[\alpha_I = \lambda_1 (-2, -1, 0) + \lambda_2 (0, -1, 0), \; \; \; \; \; \lambda_1 \geq 0, \lambda_2 \geq 0.\]
For any such $\alpha_I$ we have $\langle \alpha_I, \mathbf{f}(\mathbf{x}) \rangle \leq 0$ for all $\mathbf{x} \in \overline{\mathcal{S}}_{\mu}$.

A similar analysis can be carried out in the four other strata. Removing repetition, the set of admissible vectors $\sum_{j=1}^k \mathbf{s}_{\mu(j)}$ is
\[ P = \left\{ \left[ \begin{array}{c} -1 \\ 2 \\ 0 \end{array} \right], \left[ \begin{array}{c} 0 \\ 1 \\ -1 \end{array} \right], \left[ \begin{array}{c} 1 \\ 0 \\ -1 \end{array} \right], \left[ \begin{array}{c} 1 \\ -1 \\ -1 \end{array} \right], \left[ \begin{array}{c} 1 \\ -2 \\ 0 \end{array} \right], \left[ \begin{array}{c} 2 \\ -2 \\ -1 \end{array} \right], \left[ \begin{array}{c} -1 \\ 1 \\ 0 \end{array} \right] \right\}.\]
Clearly, there is no $\alpha_I$ satisfying (\ref{condition1}) such that $\langle \alpha_I, \mathbf{v} \rangle \leq 0$ for all $\mathbf{v} \in P$ and consequently Corollary \ref{sob} cannot be applied.

\section{Conclusions}

In this paper, we have presented several results which extend the breadth of globally stable complex balanced systems (Theorem \ref{bigtheorem}) and contributed several important theoretical steps toward proving the \emph{Global Attractor Conjecture} (Proposition \ref{globalattractorconjecture}). In particular, we have extended the notion of stratification of the state space $\mathbb{R}_{>0}^m$ to a form applicable to complex balanced systems (Definition \ref{strata}) and shown that, while trajectories lie within a particular stratum $\mathcal{S}_\mu$, they are necessarily repelled from any adjacent set $L_I$, the relative interior of a face, by the linear Lyapunov function $H(\mathbf{x}(t)) = \langle \alpha, \mathbf{x}(t) \rangle$ (Theorem \ref{maintheorem}).

The main result of this paper, Theorem \ref{maintheorem}, is limited in that it prevents trajectories from approaching the boundary from within a single stratum but does not necessarily prevent convergence to the boundary for trajectories which continually jump between strata. This is because the linear Lyapunov function $H(\mathbf{x}(t))=\langle \alpha, \mathbf{x}(t) \rangle$ which repels trajectories from the boundary is specific to each stratum $\mathcal{S}_\mu$. Even in cases where every $H(\mathbf{x}(t)) = \langle \alpha, \mathbf{x}(t) \rangle$ individually bounds trajectories away from a common set $L_I$, it cannot be ruled out without further work that trajectories approach $L_I$ through creative maneouvring between strata. We presented one result (Corollary \ref{sob}) where this difficulty could be resolved and therefore global stability could be shown.

It does not seem probable, however, that a trajectory should be permitted to approach a set $L_I$ even without a common $\alpha$ satisfying (\ref{33}) and (\ref{34}) for all $\mathcal{S}_\mu$ such that $\overline{\mathcal{S}}_\mu \cap L_I \not= \emptyset$. This is especially true in light of the restrictions placed on trajectories by the global Lyapunov function $L(\mathbf{x}(t))$ given by (\ref{globallyapunovfunction}). As such, we feel the solution to the global attractor conjecture lies in a fuller understanding of the geometry of the strata $\mathcal{S}_\mu$, the relationship to the global Lyapunov function $L(\mathbf{x}(t))$, and how the functions $H(\mathbf{x}(t))$ relate to one another for different strata.\\

\textbf{Acknowledgements:} We would like to thank the reviewers for their many helpful suggestions, as well as Anne Shiu for her attentive and thorough reads through the manuscript.

\addtocontents{toc}{\protect\contentsline{chapter}{\vspace{0.15in} \textbf{Bibliography}}{}}

\end{document}